\newtheorem{theorem}{Theorem}
\newtheorem{lemma}{Lemma}
\newtheorem{proposition}{Proposition}
\newtheorem{definition}{Definition}
\newtheorem{corollary}{Corollary}
\newtheorem{claim}{Claim}
\newcommand{\f}[2]{\frac{#1}{#2}}
\newcommand{\dpr}[2]{\langle #1,#2 \rangle}
\newcommand{\al}{\alpha}
\newcommand{\de}{\delta}
\newcommand{\ve}{\varepsilon}
\newcommand{\ka}{\kappa}
\newcommand{\la}{\lambda}
\newcommand{\si}{\sigma}
\newcommand{\vp}{\varphi}
\newcommand{\rone}{\mathbf R^1}
\newcommand{\cl}{\mathcal L}
\newcommand{\p}{\partial}
\newcommand{\beq}{\begin{equation}}
\newcommand{\eeq}{\end{equation}}
\newcommand{\beqna}{\begin{eqnarray*}}
\newcommand{\eeqna}{\end{eqnarray*}}
\newcommand{\beqn}{\begin{equation*}}
\newcommand{\eeqn}{\end{equation*}}
\newcommand{\bp}{\begin{proof}}
\newcommand{\ep}{\end{proof}}
\newcommand{\bprop}{\begin{proposition}}
\newcommand{\eprop}{\end{proposition}}
\newcommand{\bt}{\begin{theorem}}
\newcommand{\et}{\end{theorem}}
\newcommand{\bex}{\begin{Example}}
\newcommand{\eex}{\end{Example}}
\newcommand{\bc}{\begin{corollary}}
\newcommand{\ec}{\end{corollary}}
\newcommand{\bcl}{\begin{claim}}
\newcommand{\ecl}{\end{claim}}
\newcommand{\bl}{\begin{lemma}}
\newcommand{\el}{\end{lemma}}
\begin{document}

\title
[Stability of traveling waves in the `abc' system]
{Spectral stability   for  subsonic traveling pulses of the Boussinesq `abc' system }

 \author{Sevdzhan Hakkaev}
\author{Milena Stanislavova}
\author{Atanas Stefanov}

\address{Sevdzhan Hakkaev
Faculty of Mathematics and Informatics, Shumen University, 9712
Shumen, Bulgaria}\email{shakkaev@fmi.shu-bg.net}
\address{Milena Stanislavova
Department of Mathematics, University of Kansas, 1460 Jayhawk
Boulevard,  Lawrence KS 66045--7523} \email{stanis@math.ku.edu}
\address{Atanas Stefanov
Department of Mathematics, University of Kansas, 1460 Jayhawk
Boulevard,  Lawrence KS 66045--7523}

\email{stefanov@math.ku.edu}

\thanks{Hakkaev supported in part by research grant  DDVU 02/91 of 2010 of the
Bulgarian Ministry of Education and Science.
Stanislavova  supported in part by NSF-DMS \# 0807894 and NSF-DMS \# 1211315.
Stefanov supported in part by NSF-DMS \# 0908802 .}

\date{\today}

\subjclass[2000]{35B35, 35B40, 35G30}

\keywords{ linear stability, traveling waves,  Boussinesq system}
\begin{abstract}
We consider the spectral stability of certain traveling wave solutions for the Boussinesq `abc' system. More precisely, we consider the explicit $sech^2(x)$ like solutions of the form \\ $(\vp(x-w t), \psi(x- w t)=(\vp, const.  \vp)$, exhibited by M. Chen, \cite{Ch1}, \cite{Ch2} and  we provide a complete rigorous characterization of the spectral stability in all cases for which    $a=c<0, b>0$. 
\end{abstract}
\
\maketitle
\maketitle

\section{Introduction and results}
 \subsection{The general Boussinesq `abcd' model} In this work,  we are concerned with the Boussinesq system
  \begin{equation}
  \label{b:1}
  \left|
  \begin{array}{l}
  \eta_t+ u_x+(\eta u)_x+ a u_{xxx}-b \eta_{xxt}=0\\
  u_t+\eta_x+u u_x+c \eta_{xxx}-d u_{xxt}=0.
  \end{array}
  \right.
  \end{equation}
The first formal derivation  for this system has appeared in the work of Bona-Chen-Saut, \cite{BCS}
to describe  the \nolinebreak  (essentially two dimensional) motion of small-amplitude long waves on the surface of an ideal fluid under the force of gravity. Here,  $\eta$ represents the vertical deviation of the free surface from its rest position, while  $u$ is the horizontal velocity at time $t$. In the case of zero surface tension $\tau=0$, the constants $a,b,c,d$ must satisfy in addition the consistency conditions $a+b=\f{1}{2}(\theta^2-1/3)$ and $c+d=\f{1}{2}(1-\theta^2)>0$. In the case of non-zero surface tension however, one only requires $a+b+c+d=\f{1}{3}-\tau$.
For this reason (as well as from the pure mathematical interest in the analysis of \eqref{b:1}), one may as well consider \eqref{b:1} for all values of the parameters.

 Systems of the form \eqref{b:1} have been the subject of intensive investigation over the last decade.  In particular,  the role of the parameters $a,b,c,d$ in the actual fluid models has been explored in great detail in the original paper \cite{BCS} and later in \cite{BCS1}. It was argued that only models in the form \eqref{b:1}, for which one has linear and nonlinear well-posedness  are physically relevant. We refer the reader to these two papers for further discussion and some precise conditions, under which one has such well-posedness theorems.

 Regarding explicit traveling wave solutions, Chen,  has considered various cases of interest in \cite{Ch1}, \cite{Ch2}. In fact, she has written down numerous traveling wave solutions (i.e.  in the form $(\eta, u)=(\vp(x-w t), \psi(x - w t))$, where in fact some of them are not necessarily  homoclinic to zero at $\pm \infty$. In a subsequent paper, \cite{Ch3}, Chen has also found new and explicit
  multi-pulsed traveling wave solutions.

In \cite{CCN}, Chen-Chen-Nguyen consider another relevant case, namely the BBM system, which  ($a=c=0, b=d=\f{1}{6}$). They construct periodic traveling wave solutions for the BBM case, as well as in more general situations. In \cite{AABCW}, the authors explore the existence theory for the the BBM system as well as its
relations to the single BBM equation.

 We wish to discuss another aspect of \eqref{b:1}, which is its Hamiltonian formulation. Since it is derived from the Euler equation by ignoring the effects of the dissipation, one generally expects such systems to exhibit a Hamiltonian structure. {\it This is however not generally the case, unless one imposes some further restrictions on the parameters}. Indeed, if $b=d$, one can easily check that
 $$
 H(\eta, u)=\int -c \eta_x^2-au_x^2+\eta^2+(1+\eta) u^2 dx
 $$
 Furthermore, $H(\eta, u)$ is positive definite only if $a, c<0$. From this  point of view, it looks natural to consider the case $b=d$ and $a,c<0$. In order to focus our discussion, we shall concentrate then on this version
\begin{equation}
  \label{1}
  \left|
  \begin{array}{l}
  \eta_t+ u_x+(\eta u)_x+ a u_{xxx}-b \eta_{xxt}=0\\
  u_t+\eta_x+u u_x+c \eta_{xxx}-b u_{xxt}=0.
  \end{array}
  \right.
  \end{equation}
We will refer to \eqref{1} as the Boussinesq `abc' system.
It is a standard practice that stable coherent structures, such as traveling pulses etc. are produced as constrained  minimizers of the corresponding (positive definite) Hamiltonians, with respect to a fixed conserved quantity.
In fact, this program has been mostly carried out, at least in the Hamiltonian cases, in a series of papers by Chen, Nguyen and Sun. More precisely, in \cite{CNS}, the authors have shown that traveling waves for \eqref{b:1} exist in the regime\footnote{which in particular requires  that $a+b+c+d<0$, corresponding to a  ``large'' surface tension $\tau>\f{1}{3}$} $b=d$,
 $a, c<0, ac>b^2$.  In addition, they have also shown stability of such waves in the   sense of a `set stability' of the set of  minimizers.  In the companion paper \cite{CNS1}, the authors have considered the general case $b=d>0$, $a,c<0$, which in particular allows for small surface tension. 
 
  The existence of a   traveling wave was proved for every speed
 $|w| \in (0, \min(1,\f{\sqrt{ac}}{b}))$.This is the so-called subsonic  regime. Finally, we point out to a recent work by Chen, Curtis, Deconinck, Lee and Nguyen, \cite{CCDLN} in which the authors study numerically various aspects of spectral stability/instability of some solitary waves of \eqref{b:1}, including the multipulsed solutions exhibited in \cite{Ch3}. In the same paper, the authors
 also study (numerically) the transverse stability/instability of the same waves, viewed as solutions to the two dimensional problem.

 The purpose of this paper is to study rigorously  the spectral stability of some explicit traveling waves in the regime $b=d>0$, $a, c<0$. This would be achieved via the use of the instabilities indices counting formulas of Kapitula, Kevrekidis and Sandstede, \cite{KKS1}, \cite{KKS2} and the subsequent refinement by Kapitula, Stefanov  \cite{KS}.

\subsection{The traveling wave solutions}
In this section, we follow almost verbatim the description of some explicit solutions of interest of \eqref{b:1}, given by Chen, \cite{Ch1}, see also the more detailed exposition of the same results in \cite{Ch2}. More precisely,
the solutions  of interest are traveling waves, that is in the form
$$
\eta=\vp(x-w t), \ \ u(x,t)=\psi(x-w t).
$$
A direct computation shows that if we require that the
pair $(\vp, \psi)$ vanishes at $\pm \infty$, then it satisfies the system
\begin{equation}
  \label{5}
  \left|
  \begin{array}{l}
   (1+c\p_x^2)\vp-w(1-b\p_x^2)\psi+\f{\psi^2}{2}=0\\
  -w(1-b\p_x^2)\vp+(1+a\p_x^2)\psi+\vp \psi=0.
  \end{array}
  \right.
  \end{equation}
The typical ansatz that one starts with, in order to simplify the system
\eqref{5} to a single equation  is $\psi=B\vp$. This has been worked out by Chen, \cite{Ch1}, \cite{Ch2}.  The following result is contained in the said papers. 
\newpage 
\begin{theorem}(Chen, \cite{Ch1}, \cite{Ch2})
\label{mchen}
Let the parameters $a,b,c$ in the system satisfy one of the following
\begin{enumerate}
\item $a+b\neq 0$, $p=\f{c+b}{a+b}>0$, $(p-1/2)((b-a)p-b)>0$
\item $a=c=-b$, $b>0$
\end{enumerate}
Then, there are the following (pair of) exact traveling
wave solutions (i.e. solutions of \eqref{5}) $(\vp(x- wt), \psi(x- wt))$, where
\begin{eqnarray*}
& & \vp(x)=\eta_0 sech^2(\la x) \\
& & \psi(x)=  B(\eta_0)\eta_0 sech^2(\la x)
\end{eqnarray*}
and
$$
w=w(\eta_0)= \pm \frac{3+2\eta_0}{\sqrt{3(3+\eta_0)}};\  \la= \f{1}{2} \sqrt{\f{2\eta_0}{3(a-b)+2b(\eta_0+3)}}; \ \ B(\eta_0)=\pm \sqrt{\f{3}{\eta_0+3}},
$$
and $\eta_0$ is a constant that satisfies
\begin{enumerate}
\item $\eta_0=\f{3(1-2p)}{2p}$ in Case $(1)$
\item $\eta_0>-3, \eta_0\neq 0$ in Case $(2)$.
\end{enumerate}
\end{theorem}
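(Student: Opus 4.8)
The plan is to treat the statement as a construction: impose the scalar ansatz $\psi=B\vp$ and determine for which values of the constants $B$, $w$, $\eta_0$, $\la$ the pair $(\vp,\psi)=(\eta_0\,sech^2(\la x),\,B\eta_0\,sech^2(\la x))$ actually solves \eqref{5}. Substituting $\psi=B\vp$ into the two equations and collecting terms turns each equation into a scalar profile ODE of the form $\mu_i\vp+\nu_i\vp''+\kappa_i\vp^2=0$, $i=1,2$, with
$$
\mu_1=1-wB,\ \nu_1=c+wbB,\ \kappa_1=\tfrac{B^2}{2};\qquad \mu_2=B-w,\ \nu_2=wb+aB,\ \kappa_2=B.
$$
Since $\vp$ cannot satisfy two genuinely different second order ODEs, the two triples must be proportional, and this compatibility requirement is what drives the whole computation.

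First I would impose proportionality of $(\mu_i,\nu_i,\kappa_i)$. Matching $\mu_1/\kappa_1=\mu_2/\kappa_2$ yields, after clearing denominators, the relation $wB=2-B^2$, i.e. $w=(2-B^2)/B$; matching $\nu_1/\kappa_1=\nu_2/\kappa_2$ yields $wbB=aB^2-2c$. Eliminating $wB$ between these two identities gives the clean constraint $B^2(a+b)=2(b+c)$. Here the dichotomy of the theorem appears naturally: if $a+b\neq 0$ this forces $B^2=2p$ with $p=(c+b)/(a+b)$, so $B$ (and hence $\eta_0$, see below) is rigidly determined, which is Case $(1)$; if instead $a=c=-b$ both sides vanish identically and the constraint is vacuous, so $B$ remains a free parameter and one obtains a one-parameter family, which is Case $(2)$.

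Next I would insert the profile $\vp=\eta_0\,sech^2(\la x)$ into the reduced single ODE $\mu_2\vp+\nu_2\vp''+\kappa_2\vp^2=0$. Using the elementary identity $(sech^2(\la x))''=4\la^2\,sech^2(\la x)-6\la^2\,sech^4(\la x)$, which translates into $\vp''=4\la^2\vp-(6\la^2/\eta_0)\vp^2$, the ODE becomes $(\mu_2+4\nu_2\la^2)\vp+(\kappa_2-6\nu_2\la^2/\eta_0)\vp^2=0$. Setting the two bracketed coefficients to zero gives $\la^2=-\mu_2/(4\nu_2)$ and $\eta_0=6\nu_2\la^2/\kappa_2=-3\mu_2/(2\kappa_2)$. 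Substituting back the values of $\mu_2,\nu_2,\kappa_2$ together with $w=(2-B^2)/B$ and simplifying produces $B^2=3/(\eta_0+3)$, equivalently $B=\pm\sqrt{3/(\eta_0+3)}$, then $w=\pm(3+2\eta_0)/\sqrt{3(3+\eta_0)}$ and the stated formula for $\la$; combining $B^2=2p$ with $B^2=3/(\eta_0+3)$ recovers $\eta_0=3(1-2p)/(2p)$ in Case $(1)$, while in Case $(2)$ the quantity $\eta_0$ is unconstrained apart from $\eta_0>-3$, $\eta_0\neq 0$, which is exactly what keeps $B,\la,w$ real and finite.

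The only remaining point, and the place where the sign hypotheses enter, is realizability: the profile is genuine only if $\la^2>0$. Carrying out the algebra one finds $\la^2=(1-2p)/\big(4[(a-b)p+b]\big)$, whose positivity is precisely equivalent to $(p-1/2)\big((b-a)p-b\big)>0$, the hypothesis of Case $(1)$; in Case $(2)$ the same expression degenerates to $\la^2=1/(4b)$, positive exactly when $b>0$. I expect the main obstacle to be organizational rather than conceptual: keeping the proportionality bookkeeping consistent and, in particular, correctly isolating the degenerate branch $a+b=0$ where the compatibility constraint collapses and $\eta_0$ becomes a free parameter. Once the coefficient matching is tracked carefully, all four displayed formulas for $w$, $\la$, $B(\eta_0)$ and $\eta_0$ fall out by direct substitution, and confirming that they solve \eqref{5} is then a routine verification.
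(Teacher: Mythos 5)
Your derivation is correct, but there is nothing in the paper to compare it against: the paper states this result as Theorem \ref{mchen} with attribution to Chen (\cite{Ch1}, \cite{Ch2}) and gives no proof, so your proposal is a self-contained reconstruction rather than an alternative to an argument in the text. I have checked the algebra and it closes. Substituting $\psi=B\vp$ into \eqref{5} does give the two triples $(\mu_i,\nu_i,\kappa_i)$ exactly as you list them; the ratios yield $wB=2-B^2$ and $wbB=aB^2-2c$, whose difference gives $B^2(a+b)=2(b+c)$, and this is precisely where the dichotomy between Case $(1)$ ($B^2=2p$ forced) and Case $(2)$ (constraint vacuous, $\eta_0$ free) arises. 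The profile substitution then gives $\la^2=-\mu_2/(4\nu_2)$ and $\eta_0=-3\mu_2/(2\kappa_2)$, which combine with $wB=2-B^2$ to produce $B^2=3/(\eta_0+3)$, $w=\pm(3+2\eta_0)/\sqrt{3(3+\eta_0)}$, and $\la^2=\f{1}{4}\cdot\f{2\eta_0}{3(a-b)+2b(\eta_0+3)}$, matching the stated formulas; in Case $(1)$ this reduces to $\la^2=(1-2p)/(4[(a-b)p+b])$, whose positivity is equivalent to $(p-1/2)((b-a)p-b)>0$ since that product equals $\f{1}{2}(1-2p)[(a-b)p+b]$, and in Case $(2)$ it degenerates to $\la^2=1/(4b)$. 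Two small points worth tightening: the phrase ``$\vp$ cannot satisfy two genuinely different second order ODEs'' is the right intuition but the clean justification is that $sech^2(\la x)$ and $sech^4(\la x)$ are linearly independent, so each equation forces the same pair of ratios $\mu_i/\nu_i$ and $\mu_i/\kappa_i$, which is exactly proportionality of the triples (and requires $\kappa_i=B\neq 0$, guaranteed by $B^2=3/(\eta_0+3)>0$); and one should record that $p>0$ in Case $(1)$ is needed for $B=\pm\sqrt{2p}$ to be real. Your computation is also consistent with the paper's downstream use of the theorem ($p=1$, $\eta_0=-3/2$, $w=0$, $B=\pm\sqrt{2}$, $\la=1/(2\sqrt{-a})$ when $a=c<0$, $b>0$), which is a good independent check.
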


\subsection{Different notions of stability}
Before we state our results, we pause to discuss the various definitions of stability. First, one says that the solitary wave solution $(\vp_w, \psi_w)$ is orbitally stable, if for every $\ve>0$, there exists $\de>0$, so that whenever
$\|(f,g)-(\vp_w, \psi_w)\|_{X}<\de$, one has that the corresponding solutions $(\eta, u): (f,g)=(\eta, u)|_{t=0}$
$$
\sup_{t>0} \inf_{x_0}\|(\eta(x-x_0,t), u(x -x_0,t))-(\vp(x-w t), \psi(x - w t))\|_X<\ve.
$$
Note that we have not quite specified a space $X$, since this usually
depends on the particular problem at hand (and mostly on the available conserved quantities), but suffices to say that $X$  is usually chosen to be a natural energy space for the problem.
This notion of (nonlinear) stability has been of course successfully used to treat a great deal of important problems, due to the versatility of the classical Benjamin and Grillakis-Shatah-Strauss approaches. However, it looks like these methods are not readily applicable (if at all) to the Boussinesq `abc'  system. We encourage the interested reader to consult the discussion in \cite{CNS}, where a weaker, but related stability was established in the regime $ac>b^2$ and additional smallness assumption on the wave is required as well. This is why, one needs to develop an alternative approach to this important problem, which is one of the main goals of this work.

In this paper, we will concentrate on spectral stability. There is also (the closely related and almost equivalent) notion of linear stability, which we also mention below. In order to introduce the object of our study, as well as to motivate its relevance, let us perform a linearization of the nonlinear system \eqref{1}. Using  the ansatz
 $$
  \left|
  \begin{array}{l}
   \eta=\vp(x-w t)+v(t, x-w t)\\
  u=\psi(x-w t)+z(t, x-w t),
  \end{array}
  \right.
 $$
in \eqref{1} and ignoring all quadratic terms in the form $O(v^2), O(v z), O(z^2)$ leads to the following linearized problem
$$
(1-b\p_x^2)\left(\begin{array}{c}
v \\ z
\end{array}\right)_t= -\p_x \left(\begin{array}{c c}
0 & 1  \\ 1 & 0
\end{array}\right)\left(\begin{array}{cc}
1+c\p_x^2 & b w \p_x^2 +\psi-w \\
b w \p_x^2 +\psi-w & 1+a \p_x^2 +\vp
\end{array}\right)
$$
Letting
\begin{equation}
\label{110}
L=\left(\begin{array}{cc}
1+c\p_x^2 & b w \p_x^2 +\psi-w \\
b w \p_x^2 +\psi-w & 1+a \p_x^2 +\vp
\end{array}\right), J= -\p_x (1-b\p_x^2)^{-1}\left(\begin{array}{c c}
0 & 1  \\ 1 & 0
\end{array}\right)
\end{equation}
the linearized problem that we need to consider may be written in the form
\begin{equation}
\label{90}
u_t=J L u
\end{equation}
Note that in the whole line context, $L$ is a self-adjoint operator, when considered with the natural domain $D(L)=H^2(\rone)\times H^2(\rone) $.  Letting  $H:= J L$, we see that the problem \eqref{90} is in the form $u_t=H u$. The study of linear problems in this form is at the basis of
the deep theory of   $C_0$ semigroups. Informally, if the Cauchy problem $u_t=H u$ has global solutions for all smooth and decaying data, we say that $H$ generates a $C_0$ semigroup $\{T(t)\}_{t>0}$ via the exponential map $T(t)=e^{t H}$. Furthermore, we say that we have {\it linear stability} for the linearized problem $u_t=H u$, whenever the growth rate of the semigroup is zero or equivalently $\lim_{t\to \infty} e^{-\de t} \|T(t) f\|=0$ for all $\de>0$ and for all sufficiently smooth and decaying functions
$f$. Finally, we say that the system is {\it spectrally stable}, if $\si(H)\subset \{z: \Re z\leq 0\}$. It is well-known that if $H$ generates a $C_0$ semigroup, then linear stability implies spectral stability, but not vice versa. Nevertheless, the two notions are very closely related and in many cases (including the ones under consideration), they are indeed equivalent. For the purposes of  a formal definition, we proceed as follows
\begin{definition}
\label{defi:1}
We say that the problem \eqref{90} is unstable, if there is $\mathbf{f}\in H^2(\rone)\times H^2(\rone)$ and $\la: \Re \la>0$, so that
\begin{equation}
\label{100}
J L \mathbf{f}=\la \mathbf{f}.
\end{equation}
Otherwise, the problem \eqref{90} is stable. That is, stability is equivalent to the absence of solutions  of \eqref{100} with $\la: \Re\la>0$.
\end{definition}
\subsection{Main results}
We are now ready to state our results. We chose to split them in two cases, just as in Theorem \ref{mchen}. For the case $a=c=-b, b>0$, we have
\begin{theorem}
\label{theo:1}
Let $a=c=-b, b>0$. Then, the traveling wave solutions of the `abc' system
\begin{equation}
\label{waves}
\left(\eta_0 sech^2\left(\f{x - w t}{2\sqrt{b}}\right),  \pm \eta_0
\sqrt{\f{3}{\eta_0+3}}  sech^2\left(\f{x-w t}{2\sqrt{b}}\right)\right)
\end{equation}
with speed $w=\pm \frac{3+2\eta_0}{\sqrt{3(3+\eta_0)}}$
are stable, for all $\eta_0: 
\eta_0\in (-\f{9}{4},0). 
$
Equivalently, all waves in \eqref{waves} are stable, for all  speeds $|w|<1$. 
\end{theorem}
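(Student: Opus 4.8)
The plan is to reduce the spectral stability question, as formulated in Definition \ref{defi:1}, to an eigenvalue count via the Hamiltonian--Krein index theory of \cite{KKS1}, \cite{KKS2} and \cite{KS}. Writing the linearization in the form $JL\ku=\la\ku$ with $J$ skew-adjoint and $L$ self-adjoint, the relevant count reads
\begin{equation*}
k_r+2k_c+2k_i^-=n(L)-n(D),
\end{equation*}
where $n(L)$ is the number of negative eigenvalues of $L$, $D$ is the finite symmetric matrix built from the generalized kernel of $JL$ together with the conserved quantities, and $k_r,k_c,k_i^-$ count, respectively, the positive real, the genuinely complex (with positive real part), and the negative-Krein-signature purely imaginary eigenvalues. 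Since $k_r,k_c,k_i^-\ge 0$, it suffices to prove $n(L)=n(D)$: then all three vanish, in particular $k_r=k_c=0$, which is exactly the absence of solutions of \eqref{100} with $\Re\la>0$. I would first record the structural hypotheses: $J=-\p_x(1-b\p_x^2)^{-1}\si_1$ is skew-adjoint, $L$ is self-adjoint on $H^2(\rone)\times H^2(\rone)$ with essential spectrum bounded away from $0$ (so $n(L)<\infty$), and one must keep track of the kernel of $J$ (the Casimirs $\int\eta\,dx$, $\int u\,dx$) when invoking the refined count of \cite{KS}.

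The heart of the matter is the computation of $n(L)$, and here the subsonic hypothesis enters decisively. Setting $M:=1-b\p_x^2$ and using $a=c=-b$ together with $\psi=B\vp$, the operator factors as
\begin{equation*}
L=N_1 M+N_2\vp,\qquad N_1=\begin{pmatrix}1&-w\\-w&1\end{pmatrix},\quad N_2=\begin{pmatrix}0&B\\B&1\end{pmatrix},
\end{equation*}
where $M$ and $\vp$ act as scalars times the identity. For $|w|<1$ one has $N_1>0$, so conjugating by $N_1^{-1/2}$ (an inertia-preserving congruence) and then diagonalizing the constant symmetric matrix $\wt N_2:=N_1^{-1/2}N_2N_1^{-1/2}$ by an orthogonal $O$ decouples $L$ into two scalar Schr\"odinger operators $\mathcal{L}_i=1-b\p_x^2+\mu_i\vp$, $i=1,2$, with $\mu_1,\mu_2$ the roots of $\det(N_2-\mu N_1)=0$. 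Their product equals $\det N_2/\det N_1=-B^2/(1-w^2)<0$, so $\mu_1>0>\mu_2$. For $\eta_0<0$ the profile $\vp=\eta_0\,sech^2(\la x)$ is negative, hence $\mathcal{L}_2$ has nonnegative potential and $n(\mathcal{L}_2)=0$ with trivial kernel, while $\mathcal{L}_1$ is an attractive P\"oschl--Teller operator. The decisive point is that $(1,B)^T$ is a generalized eigenvector, $N_2(1,B)^T=\mu N_1(1,B)^T$ (equivalently $B^2+wB-2=0$, which the explicit $B,w$ satisfy), so the translational kernel element $\Phi'=\vp'(1,B)^T$ lies entirely in the attractive channel. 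Since $\vp'\propto sech^2(\la x)\tanh(\la x)$ has exactly one simple zero, Sturm oscillation forces $0$ to be the second eigenvalue of $\mathcal{L}_1$; therefore $n(\mathcal{L}_1)=1$ and $\dim\ker\mathcal{L}_1=1$. Collecting, $n(L)=1$ and $\ker L=\mathrm{span}\{\Phi'\}$ throughout the subsonic range. This reduction breaks down precisely when $N_1$ degenerates, i.e. when $|w|=1$, which is exactly $\eta_0=-9/4$ (and $\eta_0=0$); this is why the conclusion is confined to $\eta_0\in(-9/4,0)$.

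It then remains to compute $n(D)$ and to show it equals $1$. The $w$-dependent part of $L$ is $-wM\si_1$, which identifies the momentum $Q(\eta,u)=\int(\eta u+b\eta_x u_x)\,dx$ as the conserved quantity associated to translations; on the wave $Q(\Phi_w)=B\int(\vp^2+b\vp_x^2)\,dx$, an explicit combination of $\int sech^4$ and $\int sech^4\tanh^2$. Modulo the bookkeeping of the Casimirs of $J$ (carried out as in \cite{KS}), the matrix $D$ reduces in this one-parameter family to the sign of $d''(w)=-\tfrac{d}{dw}Q(\Phi_w)$, so that $n(D)=1$ exactly when $d''(w)>0$. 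Parametrizing by $\eta_0$, I would compute $Q(\Phi_w)=c\,\eta_0^2(\eta_0+3)^{-1/2}$ with $c>0$ and check that $\tfrac{d}{d\eta_0}Q<0$ while $\tfrac{d}{d\eta_0}w>0$ on $(-9/4,0)$; hence $\tfrac{d}{dw}Q<0$, $d''(w)>0$, and $n(D)=1=n(L)$. This yields $k_r+2k_c+2k_i^-=0$ and hence spectral stability for all $\eta_0\in(-9/4,0)$, equivalently all $|w|<1$. The main obstacle I anticipate is not the slope computation but the correct application of the index theorem in the presence of the nontrivial kernel of $J$: one must verify that the Casimir directions and the generalized kernel are organized so that the reduced $D$ is genuinely $1\times 1$ with the sign of $d''(w)$, and that no spurious negative directions are introduced. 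Keeping those sign conventions consistent is the delicate part.
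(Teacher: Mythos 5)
Your proposal is correct and follows the same overall architecture as the paper: the Kapitula--Kevrekidis--Sandstede count in the Kapitula--Stefanov form adapted to the non-invertible skew operator, the reduction of $n(L)$ to two decoupled scalar Hill operators, and the identification of $n(D)$ with the sign of $\p_w Q(\Phi_w)$ for $Q=\int(\eta u+b\eta_x u_x)\,dx$ --- this is exactly the quantity $d(w)=\p_w\bigl[\dpr{\vp}{\psi}+b\dpr{\vp'}{\psi'}\bigr]$ computed in Section \ref{sec:4}, with the same conclusion $d(w)<0$ on $(-\f94,0)$. Where you genuinely diverge is in establishing $n(L)=1$. The paper conjugates by the rotation $T$, the scaling $S=\mathrm{diag}(\sqrt{b(1\mp w)})$ and an orthogonal $U$, reaches the Hill operators $\cl_1,\cl_2$ with explicit coefficients $\frac{1+2Bw\pm\sqrt{4B^2+4Bw+1}}{2b(1-w^2)}$, and counts negative eigenvalues via the P\"oschl--Teller spectrum (Lemma \ref{le:90}), checking $\la_1(\cl_1)=0$ by hand. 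You instead observe that $(1,B)^T$ solves the pencil equation $N_2v=\mu N_1v$ because $B^2+wB=2$ (which Chen's profiles do satisfy), so the translational kernel element $\vp'(1,B)^T$ lies entirely in the attractive channel, and Sturm oscillation applied to the single zero of $\vp'$ gives $n(\cl_1)=1$ and simplicity of the kernel with no explicit spectral formula; positivity of the repulsive channel is immediate from $\mu_2\vp\geq0$. This is a more structural route to Proposition \ref{prop:25} and would survive perturbations of the $sech^2$ profile, whereas the paper's computation is more explicit but tied to the exact potential. Two points to tighten: (i) you work with $L$ directly, while the paper passes to $\tilde L=(1-b\p_x^2)^{-1/2}L(1-b\p_x^2)^{-1/2}$ precisely so that the skew factor becomes $\p_x$ times an invertible matrix and Theorem \ref{t:index} applies --- your ``bookkeeping of the Casimirs'' is exactly this step and should not be left implicit; (ii) the sign analysis of $Q$ should be carried out on both branches $B=\pm\sqrt{3/(3+\eta_0)}$ with the matching sign of $w$, as the paper does, though the conclusion $\p_wQ<0$ holds on both.
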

\noindent Note that $|w|<1$ is equivalent to $\eta_0\in (-\f{9}{4},0)$, so we assume this henceforth. 
In the remaining case, we assume only $a=c<0, b=d>0$, but observe that in this case,   Theorem \ref{mchen}  requires that $\eta_0=-3/2, w=0$, that is the traveling waves become standing waves. 
 \begin{theorem}
 \label{theo:20}
 Let $a=c<0, b=d>0$.
 Then, the  standing wave solutions of the Boussinesq system 
 $$
 \vp(x)=-\f{3}{2} sech^2\left(\f{x}{2\sqrt{-a}}\right), \psi(x)= \pm \f{3}{\sqrt{2}} 
 sech^2\left(\f{x}{2\sqrt{-a}}\right)
 $$
 are spectrally stable \underline{if and only if} 
\begin{equation}
\label{1000}
 \dpr{(a\p_x^2+1-\vp)^{-1}(\vp- b \vp'')}{(\vp- b \vp'')}\leq 8 \sqrt{-a}
  \left( \f{9}{2}+\f{12}{5} \f{b}{|a|} - 
 \f{3}{10} \f{b^2}{a^2}\right).
 \end{equation}
 In particular, the condition \eqref{1000} holds ( and thus the waves are spectrally stable),  whenever 
 $$
0\leq \f{b}{-a}< 8.00163, 
$$
On the other hand, the condition \eqref{1000} fails ( and thus the waves are spectrally unstable), if 
$$
\f{b}{-a}>   8.82864. 
$$
 \end{theorem}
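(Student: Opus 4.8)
The strategy is to reduce the spectral stability question for the standing wave to a scalar operator-theoretic condition and then to analyze that condition explicitly using the known $\mathrm{sech}^2$ profile. First I would exploit the structure of the standing-wave case $w=0$: when $w=0$ the off-diagonal entries of $L$ in \eqref{110} simplify (the term $bw\p_x^2$ drops out), and the operator $J$ decouples in a convenient way because $J=-\p_x(1-b\p_x^2)^{-1}\left(\begin{smallmatrix}0&1\\1&0\end{smallmatrix}\right)$ swaps the two components. The eigenvalue problem $JL\mathbf{f}=\la\mathbf{f}$ from Definition \ref{defi:1} can then be rewritten, after eliminating one component, as a generalized spectral problem $\la^2 = $ (something involving $L$-blocks and $(1-b\p_x^2)^{-1}\p_x^2$). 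Spectral stability is the statement that no $\la$ with $\Re\la>0$ occurs, i.e.\ that the relevant self-adjoint pencil has no negative direction; this is exactly where the instability-index count of \cite{KKS1,KKS2,KS} enters. I would set up the count $k_r + k_c + k_i^- = n(\cl)$ relating the number of real unstable eigenvalues, complex quadruplets, and negative-Krein-signature imaginary eigenvalues to the number of negative eigenvalues of the constrained operator $L$.

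Next I would compute the Morse index of $L$ restricted to the appropriate symmetry-reduced subspace. Because $\vp,\psi$ are explicit $\mathrm{sech}^2$ profiles solving \eqref{5}, the scalar operators appearing on the diagonal are of the classical form $-\,(\text{const})\,\p_x^2 + 1 - (\text{const})\,\mathrm{sech}^2$, whose eigenvalues and eigenfunctions are completely known (Pöschl–Teller / associated Legendre spectral data). This lets me count negative eigenvalues exactly and identify the kernel, which by translation invariance must contain $(\vp',\psi')$. The key quantity in \eqref{1000}, namely $\dpr{(a\p_x^2+1-\vp)^{-1}(\vp-b\vp'')}{(\vp-b\vp'')}$, is precisely the scalar that governs whether the last potentially-unstable eigenvalue crosses through the origin: it arises from projecting off the generalized kernel (the translational mode and its Jordan partner) and evaluating the sign of the reduced quadratic form. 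I would therefore reduce the index count to the sign of a single inner product and show it equals the displayed condition.

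The final, and genuinely computational, step is to evaluate $\dpr{(a\p_x^2+1-\vp)^{-1}(\vp-b\vp'')}{(\vp-b\vp'')}$ in closed form and compare it to the right-hand side $8\sqrt{-a}\bigl(\tfrac92+\tfrac{12}{5}\tfrac{b}{|a|}-\tfrac{3}{10}\tfrac{b^2}{a^2}\bigr)$. Here I would use the explicit profile $\vp=-\tfrac32\,\mathrm{sech}^2\!\bigl(x/(2\sqrt{-a})\bigr)$; after the rescaling $y=x/(2\sqrt{-a})$ the operator $a\p_x^2+1-\vp$ becomes $-\tfrac14\p_y^2+1+\tfrac32\,\mathrm{sech}^2 y$, a resolvent one can invert against the explicit source $\vp-b\vp''$ either by solving the associated inhomogeneous ODE by variation of parameters or by expanding the source in the spectral basis of the Pöschl–Teller operator. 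The integrals reduce to standard moments of powers of $\mathrm{sech}$ (e.g.\ $\int \mathrm{sech}^4 = \tfrac43$, $\int \mathrm{sech}^6 = \tfrac{16}{15}$, and so on), which produce the rational coefficients $\tfrac92,\tfrac{12}{5},\tfrac{3}{10}$ and the prefactor $8\sqrt{-a}$. The quadratic-in-$b/|a|$ nature of both sides then yields a polynomial inequality whose threshold is exactly where the two sides meet; the numerical bounds $8.00163$ and $8.82864$ come from locating the relevant root of that polynomial (the gap between the two numbers reflects a sufficient-but-not-sharp estimate of one of the resolvent terms rather than the exact crossing).

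\textbf{Main obstacle.} The principal difficulty is the explicit inversion of $(a\p_x^2+1-\vp)^{-1}$ against $\vp-b\vp''$ and the resulting closed-form evaluation of the inner product: one must be certain that $a\p_x^2+1-\vp$ is invertible on the orthogonal complement of its kernel and that the source lies in the correct range, then carry out the resolvent computation without sign or normalization errors so that the delicate rational coefficients emerge correctly. A secondary obstacle is bookkeeping in the index count—correctly tracking the Krein signatures and the two-dimensional generalized kernel forced by the symmetries—so that stability is shown to be \emph{equivalent} to \eqref{1000} rather than merely implied by it.
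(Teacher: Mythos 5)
Your overall framework --- the Kapitula--Kevrekidis--Sandstede / Kapitula--Stefanov index count, the Morse index of $L$ computed from explicit $sech^2$ spectral data, and the reduction of stability to the sign of a single quadratic form --- matches the paper's, and your closing remark that the numerical gap reflects a non-sharp estimate of one resolvent term is on target. But there are two concrete gaps. First, you never perform the step that actually produces the two sides of \eqref{1000}. The paper conjugates $L$ by explicit orthogonal matrices to obtain $L=(ST)^*\,\mathrm{diag}(L_{KdV},L_{Hill})\,ST$ with $L_{KdV}=a\p_x^2+1+2\vp$ and $L_{Hill}=a\p_x^2+1-\vp$; since $ST(\sqrt2,1)^T=(2\sqrt{2/3},-1/\sqrt3)^T$, the stability index splits as $I=\f13\bigl(8\dpr{L_{KdV}^{-1}f}{f}+\dpr{L_{Hill}^{-1}f}{f}\bigr)$ with $f=\vp-b\vp''$. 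The right-hand side of \eqref{1000} is $-8\dpr{L_{KdV}^{-1}f}{f}$, computed in closed form from the identity $L_{KdV}\vp_a=-\vp''$ (differentiate the profile equation $a\vp''+\vp+\vp^2=0$ in $a$), which gives $L_{KdV}^{-1}f=(a+b)\vp_a-\vp$ and then the coefficients $\f92,\f{12}{5},\f{3}{10}$ via $sech$-moments. You attribute those coefficients to inverting $a\p_x^2+1-\vp$ against $\vp-b\vp''$, which is wrong, and without the diagonalization you cannot see why both operators appear or where the weights $8/3$ and $1/3$ come from.

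Second, your plan to evaluate $\dpr{(a\p_x^2+1-\vp)^{-1}(\vp-b\vp'')}{(\vp-b\vp'')}$ ``in closed form'' by variation of parameters or a P\"oschl--Teller eigenbasis does not go through: after rescaling the potential is $+\f32 sech^2 y$, which is repulsive and not reflectionless (the homogeneous solutions are Legendre functions of complex degree), the operator has purely continuous spectrum $[1,\infty)$ and no discrete basis to expand in, and the paper explicitly states this quantity cannot be computed. What the paper does instead is a partial inversion: from $L_{Hill}\vp=-2\vp^2=2a\vp''+2\vp$ one gets $L_{Hill}^{-1}[a\vp''+\vp]=\vp/2$, so one projects $f$ onto $a\vp''+\vp$, inverts that component exactly, and bounds the remainder $g$ by $0<\dpr{L_{Hill}^{-1}g}{g}\le\|g\|^2$ using $\si(L_{Hill})=[1,\infty)$; the two resulting quadratic inequalities in $b/|a|$ are what yield the thresholds $8.00163$ and $8.82864$. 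Your proposal needs this mechanism (or an equivalent one) to produce any bounds at all, and as written it is internally inconsistent, since a genuine closed-form evaluation would give a single sharp threshold rather than a gap. The $\la^2$-pencil reduction you sketch at the outset is also a departure from the paper, which instead conjugates by $(1-b\p_x^2)^{1/2}$ to reach the form $\tilde J\tilde L$ covered by the Kapitula--Stefanov theorem; that route is workable in principle but would require its own index bookkeeping.
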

{\bf Remark: }
  Note that while, we  cannot explicitly compute   the value $(a\p_x^2+1-\vp)^{-1}(\vp- b \vp'')$ in \eqref{1000}, we obtain   estimates, which  imply some pretty good results for the stability/instability intervals. One can in fact push this further to narrow the gap between the stability and instability regions, predicted by \eqref{1000}. This can be done in principle with any degree of accuracy, but it increases the complexity  the argument.

\section{Preliminaries}
 In this section, we collect some preliminary results, which will be useful in the sequel.

\subsection{Some spectral properties of $L$}
We shall need some   spectral information about the operator $L$. We collect the results in the following
\begin{proposition}
\label{prop:10}
Let $a,c<0$ and  $w: 0\leq |w|<\min\left(1, \f{\sqrt{a c}}{|b|}\right)$. Then, the self-adjoint operator $L$ has the following spectral properties
\begin{itemize}
\item
  Then the operator $L$ has an eigenvalue at zero, with an eigenvector
$\left(\begin{array}{c} \vp' \\ \psi' \end{array} \right)$.
\item There is $\ka>0$, so that the essential spectrum is in $\si_{ess}(L)\subset [\ka, \infty)$.
\end{itemize}
\end{proposition}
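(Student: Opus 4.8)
The plan is to treat the two assertions separately: the kernel of $L$ comes from translation invariance, while the essential spectrum is computed by freezing the coefficients at $x=\pm\infty$.

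For the zero eigenvalue I would simply differentiate the profile system \eqref{5} in $x$ and match it against $L\,(\vp',\psi')^T$. Expanding the first component gives $(1+c\p_x^2)\vp' + (bw\p_x^2+\psi-w)\psi' = (1+c\p_x^2)\vp' - w(1-b\p_x^2)\psi' + \psi\psi'$, which is exactly $\p_x$ of the first equation of \eqref{5}; similarly the second component equals $-w(1-b\p_x^2)\vp' + (1+a\p_x^2)\psi' + (\vp\psi)'$, which is $\p_x$ of the second equation of \eqref{5}. Since $(\vp,\psi)$ solves \eqref{5} identically in $x$, both expressions vanish, so $(\vp',\psi')^T$ lies in $\ker L$; and since $\vp,\psi$ are $sech^2$ profiles, $(\vp',\psi')^T\in H^2(\rone)\times H^2(\rone)=D(L)$, so $0$ is a genuine eigenvalue.

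For the essential spectrum I would write $L=L_\infty+V$, where $V$ is multiplication by the matrix with entries $0,\psi,\psi,\vp$, and $L_\infty$ is the constant-coefficient limiting operator
\[
L_\infty=\left(\begin{array}{cc} 1+c\p_x^2 & bw\p_x^2-w \\ bw\p_x^2-w & 1+a\p_x^2\end{array}\right),
\]
obtained as $\vp,\psi\to 0$. Because $\vp,\psi$ decay exponentially, $V$ is a relatively compact perturbation of $L_\infty$ (it maps $D(L_\infty)=H^2\times H^2$ compactly into $L^2$, by Rellich together with the decay), so Weyl's theorem yields $\si_{ess}(L)=\si_{ess}(L_\infty)=\si(L_\infty)$, the last equality because a self-adjoint Fourier multiplier has purely essential spectrum. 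I would then compute $\si(L_\infty)$ via the Fourier transform: $\widehat{L_\infty}(\xi)$ is the symmetric matrix with diagonal entries $p=1-c\xi^2\geq 1$ and $r=1-a\xi^2\geq 1$ (since $a,c<0$) and off-diagonal $-w(b\xi^2+1)$, whose eigenvalues $\mu_\pm(\xi)$ sweep out $\si(L_\infty)$ as $\xi$ ranges over $\rone$. Positivity of both eigenvalues is equivalent to $\det\widehat{L_\infty}(\xi)=pr-w^2(b\xi^2+1)^2>0$ together with the (automatic) positive trace $\geq 2$.

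The crux, and the main obstacle, is this determinant bound, which is precisely where the subsonic hypothesis $|w|<\min(1,\sqrt{ac}/|b|)$ enters. I would first use the elementary estimate $(1-c\xi^2)(1-a\xi^2)\geq (1+\sqrt{ac}\,\xi^2)^2$ (an AM-GM inequality, since the difference is $(\sqrt{-a}-\sqrt{-c})^2\xi^2\geq 0$), and then combine the two constraints $|w|<1$ and $|w||b|<\sqrt{ac}$ to get $|w|(|b|\xi^2+1)<1+\sqrt{ac}\,\xi^2$ for all $\xi$. This gives $w^2(b\xi^2+1)^2<(1+\sqrt{ac}\,\xi^2)^2\leq pr$, hence $\det\widehat{L_\infty}(\xi)\geq 1-|w|>0$ uniformly. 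Both constraints are genuinely needed — one controls the constant term and the other the quartic term in $\xi$, and without them the determinant can vanish so that the essential spectrum reaches $0$. Finally, since each $\mu_\pm(\xi)$ is continuous, strictly positive, and tends to $+\infty$ as $|\xi|\to\infty$, the infimum $\ka:=\inf_\xi\mu_-(\xi)$ is attained and positive, giving $\si(L_\infty)\subset[\ka,\infty)$ and therefore $\si_{ess}(L)\subset[\ka,\infty)$. The relative-compactness step is routine given the exponential decay of the profiles, so the entire weight of the argument rests on the determinant inequality above.
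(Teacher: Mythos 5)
Your proposal is correct and follows essentially the same route as the paper: differentiating the profile system \eqref{5} to exhibit $(\vp',\psi')^T\in\ker L$, then using Weyl's theorem to reduce the essential spectrum to the constant-coefficient operator and checking positive definiteness of the Fourier symbol via its determinant, with the same key inputs ($-a-c\ge 2\sqrt{ac}$ and the subsonic condition). The only difference is cosmetic — you factor $(1+\sqrt{ac}\,\xi^2)^2-w^2(b\xi^2+1)^2$ as a difference of squares, while the paper expands the determinant and bounds the quadratic coefficient directly — and your closing remark on the $O(\xi^2)$ growth of the eigenvalues, which secures the gap $\ka>0$, matches the observation the paper defers to the proof of Proposition \ref{prop:25}.
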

\begin{proof}
The first property is easy to establish, this is the usual eigenvalue at zero generated by translational invariance. For the proof,  all one needs to do is take a spatial derivative in the defining system \eqref{5}, whence $L\left(\begin{array}{c} \vp' \\ \psi' \end{array} \right)= \left(\begin{array}{c} 0 \\  0 \end{array} \right)$.

Regarding the essential spectrum, we reduce matters to the Weyl's theorem (using the vanishing of the waves at $\pm \infty$), which ensures that
$$
\si_{ess.}(L)=\si_{ess.}(L_0)=\si[\left(\begin{array}{cc}
1+c\p_x^2 & b w \p_x^2 -w \\
b w \p_x^2 -w & 1+a \p_x^2
\end{array}\right)]
$$
That is, it remains to check that the matrix differential operator $L_0>\ka$.
By Fourier transforming $L_0$, it will suffice to check that the matrix
$$
L_0(\xi)=\left(\begin{array}{cc}
1-c\xi^2 & -w(b\xi^2 +1) \\
-w(b\xi^2 +1)& 1-a \xi^2
\end{array}\right)
$$
is positive definite for all $\xi\in \rone$. Since $1-c\xi^2\geq 1$, it will suffice to check that the determinant has a  positive minimum over $\xi\in \rone$.
We have
$$
det(L_0(\xi))= \xi^4(ac-b^2 w^2)+\xi^2(-a-c-2b w^2)+(1-w^2)\geq (1-w^2)+
2\xi^2(\sqrt{a c}- |b| w^2),
$$
where in the last inequality, we have used  $-a-c\geq 2 \sqrt{a c}$. The strict
positivity follows by observing that $\sqrt{ac}\geq |b| w  \geq |b|w^2$, since $w<1$.
\end{proof}

\subsection{Instability index count}  In this section, we introduce the instability indices counting formulas, which in many cases of interest can in fact be used to determine accurately both stability and instability regimes for the waves under consideration. As we have mentioned above, this theory has been under development for some time, see \cite{Mad}, \cite{Kap}, \cite{Pel}, but we use a recent formulation due to Kapitula-Kevrekidis and Sandstede (KKS), \cite{KKS1} (see also \cite{KKS2}). In fact, even the (KKS) index count formula is not directly applicable\footnote{due to a crucial assumption for invertibility of the skew-symmetric operator $J$, which is not satisfied for $\p_x$ acting on $\rone$} to the
   problem of \eqref{90}, which is why Kapitula and Stefanov, \cite{KS} have found an approach, based on the KKS of the theory, which covers this situation.
   In order to simplify the exposition, we will restrict to a corollary of the main result in \cite{KS}.
More precisely, a the stability problem in the form is considered in the form
\begin{equation}
\label{c:5}
\partial_x\cl u= \lambda u,
\end{equation}
where $\cl$ is a self-adjoint linear differential operator with domain
$D(\cl)=H^s(\rone)$ for some $s$. It is assumed that for the
operator $\cl$,
\begin{enumerate}
\item there are $n(\cl)=N<+\infty$ negative eigenvalues\footnote{We will henceforth denote by $n(M)$ the number of  negative eigenvalues (counting multiplicities)   of
a self-adjoint operator $M$} (counting multiplicity), so that each of the corresponding eigenvectors $\{f_j\}_{j=1}^N$ belong to $H^{1/2}(\rone)$.
\item there is a $\kappa>0$ such that
$\sigma_{ess}(\cl)\subset[\kappa^2,+\infty)$
\item $\dim[\ker(\cl)]=1$, $\ker(\cl)= span\{\psi_0\}$,
$\psi_0$ real-valued function, $\psi_0\in H^{\infty}(\rone)\cap \dot{H}^{-1}(\rone)$.
\end{enumerate}
  Here, $\dot{H}^{-1}(\rone)$ is the homogeneous Sobolev space, defined via the   norm
  $$
  \|u\|_{\dot{H}^{-1}(\rone)}:=\left( \int_{\rone} \f{|\hat{u}(\xi)|^2}{|\xi|^2} d\xi\right)^{1/2}.
  $$
  or equivalently, $u=\p_x z$ in sense of distributions, where $z\in L^2$  and
  $\|u\|_{\dot{H}^{-1}(\rone)}:=\|z\|_{L^2}$. In that case, we have
  \begin{theorem}(Theorem 3.5, \cite{KS})
  \label{t:index}
For the eigenvalue problem
\begin{equation}
\label{c:10}
\partial_x\cl u=\lambda u,\quad u\in L^2(\rone),
\end{equation}
where the self-adjoint operator $\cl$ satisfies $D(\cl)=H^s(\rone)$ for some
$s>0$, assume that
\[
\langle\cl^{-1}\partial_x^{-1}\psi_0,\partial_x^{-1}\psi_0\rangle\neq0.
\]
Then, the number of  solutions of \eqref{c:5}, $n_{unstable}(\cl)$,
with  $\la:\Re \la>0$ satisfies\footnote{here $\p_x^{-1} \psi_0$ is {\bf any} $L^2$ function $f$, so that $\psi_0=\p_x f$ in distributional sense}
\begin{equation}
\label{120}
0\leq n_{unstable}(\p_x \cl)= n(\cl)-n\left(\langle\cl^{-1}\partial_x^{-1}\psi_0,\partial_x^{-1}\psi_0\rangle\right)\mod 2.
\end{equation}
\end{theorem}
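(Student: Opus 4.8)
\emph{The plan is} to deduce the stated parity relation from the stronger \emph{Hamiltonian--Krein index} identity, which is the quantity one actually controls in this circle of ideas, and then reduce modulo two. First I would record the two symmetries of $\si(\p_x\cl)$: since $\p_x$ is skew-adjoint and $\cl$ is self-adjoint with real coefficients, the spectrum is invariant under $\la\mapsto-\bar\la$ (the Hamiltonian symmetry) and under $\la\mapsto\bar\la$ (reality). Hence the eigenvalues organize into real pairs $\{\la,-\la\}$, purely imaginary pairs $\{i\mu,-i\mu\}$, and complex quadruplets $\{\la,\bar\la,-\la,-\bar\la\}$. Writing $k_r$ for the number of positive real eigenvalues, $k_c$ for the number of quadruplets, and $k_i^-$ for the number of purely imaginary eigenvalues with positive imaginary part and \emph{negative} Krein signature (relative to the indefinite form $\dpr{\cl\,\cdot}{\cdot}$), the genuinely unstable modes are counted by $n_{unstable}(\p_x\cl)=k_r+2k_c$, since each quadruplet contributes exactly two eigenvalues with $\Re\la>0$.

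The identity I would aim to establish is
\begin{equation}
\label{krein}
k_r+2k_c+2k_i^-=n(\cl)-n(D),\qquad D:=\dpr{\cl^{-1}\p_x^{-1}\psi_0}{\p_x^{-1}\psi_0}.
\end{equation}
Granting \eqref{krein}, the theorem is immediate: subtracting the even quantity $2k_i^-$ and reducing modulo two yields $n_{unstable}(\p_x\cl)=k_r+2k_c\equiv n(\cl)-n(D)\pmod 2$, while the bound $n_{unstable}\geq 0$ is automatic. The hypothesis $D\neq0$ enters precisely here: it guarantees that the algebraic multiplicity of $\la=0$ is completely accounted for by the generalized kernel and contributes no additional ambiguity to the count.

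To prove \eqref{krein} I would split it in two. The first step is a statement about constrained quadratic forms: the relevant constraint is symplectic orthogonality to the generalized kernel, which for $J=\p_x$ reads $\dpr{u}{\p_x^{-1}\psi_0}=0$, defining a subspace $S\subset L^2$. The classical constrained--index formula then gives $n(\cl|_S)=n(\cl)-n(D)-z(D)$, where $z(\cdot)$ denotes the nullity; since $D$ is a nonzero scalar, $z(D)=0$ and $n(\cl|_S)=n(\cl)-n(D)$. The second, and deeper, step is the Kapitula--Kevrekidis--Sandstede correspondence identifying $n(\cl|_S)$ with the left side of \eqref{krein}; here one realizes $\p_x\cl$ as a self-adjoint operator on the Pontryagin space obtained by equipping $\mathrm{Range}(\p_x)$ with the indefinite inner product induced by $\cl$, and counts negative directions via a homotopy deforming $\cl$ to a reference operator, the point being that $k_r+2k_c+2k_i^-$ is a homotopy invariant because eigenvalues can only leave the imaginary axis through collisions of opposite Krein signature, which preserve this weighted count.

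The main obstacle, and the reason the off-the-shelf KKS theorem does not apply, is the non-invertibility of $J=\p_x$ on the whole line: its range excludes constants and its formal inverse is unbounded. I would handle this exactly as in \cite{KS}, working throughout on the mean-zero subspace and exploiting the hypothesis $\psi_0\in\dot H^{-1}(\rone)$, which is precisely what makes $\p_x^{-1}\psi_0$ a genuine $L^2$ function, so that both $S$ and the scalar $D$ are well defined. The delicate points are to show that the generalized kernel of $\p_x\cl$ is spanned by $\psi_0$ together with the single Jordan vector $\cl^{-1}\p_x^{-1}\psi_0$ (whose existence rests on $\p_x^{-1}\psi_0$ pairing suitably against $\ker\cl$), and to verify that the Pontryagin-space machinery survives this reduction; once these are in place the homotopy argument proceeds as in the invertible case.
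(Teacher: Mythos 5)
The paper does not actually prove this statement: it is imported verbatim as Theorem 3.5 of \cite{KS}, so there is no in-paper argument to compare against. Your outline is the standard Hamiltonian--Krein index route that underlies the cited result --- establish $k_r+2k_c+2k_i^-=n(\cl)-n(D)$ via the constrained-index formula $n(\cl|_S)=n(\cl)-n(D)-z(D)$ together with the KKS Pontryagin-space count, then reduce modulo $2$ --- and that is indeed how \cite{KKS1}, \cite{KS} proceed, so the approach is essentially the same as the source's. Be aware, though, that the two steps you defer (the Krein-signature/homotopy argument and, especially, the treatment of the non-invertible $J=\p_x$ on the line via the $\dot H^{-1}$ hypothesis) constitute the actual content of the theorem being cited, so as written your sketch leans on \cite{KS} for precisely the part it is meant to supply.
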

 Of course, our eigenvalue problem \eqref{100} does not immediately
 fit the form of Theorem \ref{t:index}. First, Theorem \ref{t:index} applies for scalar-valued operators $\cl$, while  we need to deal with vector-valued operators. This is a minor issue and in fact, one sees easily that the arguments   in \cite{KS} carry over easily in the case, where $\cl$ is a vector-valued self-adjoint operator as well. A second, more substantive issue is that the form of \eqref{100} is not quite the one in \eqref{c:10}. Namely, we have that the operator $J$, while still skew-symmetric is not equal to $\p_x$.

 In order to fix that, we need to recast the eigenvalue problem \eqref{100} in a slightly different form. Indeed, letting $\mathbf{f}=(1-b\p_x^2)^{-1/2}\mathbf{g}$ and taking $(1-b\p_x^2)^{1/2}$ on both sides of \eqref{100},  we may rewrite it as follows
 $$
 -\p_x   \left(\begin{array}{c c}
0 & 1  \\ 1 & 0
\end{array}\right)(1-b\p_x^2)^{-1/2} L (1-b\p_x^2)^{-1/2} \mathbf{g}=\la \mathbf{g}.
 $$
  If we now introduce
  $$
  \tilde{J}:= -\p_x   \left(\begin{array}{c c}
0 & 1  \\ 1 & 0
\end{array}\right); \ \ \tilde{L}:=(1-b\p_x^2)^{-1/2} L (1-b\p_x^2)^{-1/2},
  $$
  we easily see that $\tilde{J}$ is still anti-symmetric, $\tilde{L}$ is
  self-adjoint and we have managed to represent the eigenvalue problem in the form $\tilde{J} \tilde{L} \mathbf{g}=\la \mathbf{g}$. Note that the operator $\tilde{J}$ is very similar to $\p_x$, except for the action of the invertible symmetric operator $\left(\begin{array}{c c}
0 & 1  \\ 1 & 0
\end{array}\right)$   on it. It is not hard to see that the result of Theorem \ref{t:index} applies to it (while it still fails the standard conditions of the KKS theory, due to the non-invertibility of $\tilde{J}$). Note that one needs to replace $\p_x^{-1}$ by $\tilde{J}^{-1}$ in the formula \eqref{120}. Furthermore, the number of unstable modes for the two systems ($J L$ and $\tilde{J} \tilde{L}$) is clearly the same, due to the simple transformation $(1-b\p_x^2)^{-1/2}$ connecting the corresponding eigenfunctions.

Thus, {\it if we can verify the conditions under which Theorem \ref{t:index} applies}, we get the stability index formula
\begin{equation}
\label{130}
n_{unstable}(J L)=n_{unstable}(\tilde{J} \tilde{L})=n(\tilde{L})-
n(\dpr{\tilde{L}^{-1} \tilde{J}^{-1}\psi_0}{\tilde{J}^{-1}\psi_0}) \mod 2.
\end{equation}
Since by Proposition \ref{prop:10}, $L\left(\begin{array}{c}\vp'\\ \psi'\end{array}\right)=0$, we conclude that $\tilde{L}[(1-b \p_x^2)^{1/2} \left(\begin{array}{c}\vp'\\ \psi'\end{array}\right)]=0$. It follows that
$\psi_0=\p_x (1-b \p_x^2)^{1/2} \left(\begin{array}{c}\vp \\ \psi\end{array}\right)$ and
\begin{eqnarray*}
 \dpr{\tilde{L}^{-1} \tilde{J}^{-1}\psi_0}{\tilde{J}^{-1}\psi_0} &=&
\dpr{ L^{-1} [(1-b \p_x^2) \left(\begin{array}{c c}
0 & 1  \\ 1 & 0
\end{array}\right)   \left(\begin{array}{c}\vp \\ \psi\end{array}\right)]}{ (1-b \p_x^2) \left(\begin{array}{c c}
0 & 1  \\ 1 & 0
\end{array}\right)  \left(\begin{array}{c}\vp \\ \psi\end{array}\right)}=\\
&=& \dpr{L^{-1}[(1-b \p_x^2) \left(\begin{array}{c}\psi \\ \vp \end{array}\right)]}{(1-b \p_x^2) \left(\begin{array}{c}\psi \\ \vp \end{array}\right)}
\end{eqnarray*}
Thus, we conclude that we will have established spectral stability for \eqref{100},
if we can verify the conditions $(1), (2), (3)$ of Theorem \ref{t:index} for the operator $\tilde{L}$,
 $n(\tilde{L})=1$ and
 \begin{equation}
 \label{150}
 \dpr{L^{-1}[(1-b \p_x^2) \left(\begin{array}{c}\psi \\ \vp \end{array}\right)]}{(1-b \p_x^2) \left(\begin{array}{c}\psi \\ \vp \end{array}\right)}<0.
 \end{equation}
 and instability otherwise. 

Concretely, we will verify the conditions on $\tilde{L}$ in Proposition \ref{prop:25} below, after which, we   compute the quantity in \eqref{150} in Proposition
\ref{prop:30}.
\begin{proposition}
\label{prop:25}
The self-adjoint operator $\tilde{L}=(1-b \p_x^2)^{-1/2}L (1-b \p_x^2)^{-1/2}$ satisfies
\begin{enumerate}
 \item $\si_{ess.}(\tilde{L})\subset [\ka, \infty)$ for some positive $\ka$.
\item  $n(\tilde{L})=1$.
 \item $Ker(\tilde{L})=span\{(1-b \p_x^2)^{1/2}\left(\begin{array}{c} \vp'\\ \psi'\end{array}\right)\}$.
\end{enumerate}
in the following cases 
\nopagebreak 
\begin{itemize}
\item $a=c=-b, b>0$, $B=\pm \sqrt{\f{3}{3+\eta_0}}$, $w=\pm \f{3+2\eta_0}{\sqrt{3(3+\eta_0)}}, 
\eta_0\in (-\f{9}{4}, 0)$.
\item $a=c<0$, $b>0$, $w=0, B=\pm \sqrt{2}$. 
\end{itemize}
\end{proposition}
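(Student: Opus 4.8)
\section*{Proof proposal for Proposition \ref{prop:25}}

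The plan is to remove the non-differential conjugation first and then decouple the resulting matrix operator into two scalar Schr\"odinger operators with explicitly solvable ($\mathrm{sech}^2$) potentials. Since $S:=(1-b\p_x^2)^{-1/2}$ is bounded, positive, self-adjoint and invertible, the map $u\mapsto Su$ is a bijection and $\dpr{\tilde L u}{u}=\dpr{L Su}{Su}$; hence by Sylvester's law of inertia $n(\tilde L)=n(L)$ and $\dim\ker(\tilde L)=\dim\ker(L)$, with $\ker(\tilde L)=S^{-1}\ker(L)=\mathrm{span}\{(1-b\p_x^2)^{1/2}(\vp',\psi')^T\}$ once we know $\dim\ker(L)=1$. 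Thus claims $(2),(3)$ reduce to the corresponding statements for $L$. For claim $(1)$ I would invoke Weyl's theorem exactly as in Proposition \ref{prop:10}: the coefficients of $\tilde L$ tend to constants at $\pm\infty$, so $\si_{ess}(\tilde L)=\si_{ess}(\tilde L_0)$, and on the Fourier side $\tilde L_0(\xi)=(1+b\xi^2)^{-1}L_0(\xi)$. Since $L_0(\xi)$ is positive definite (Proposition \ref{prop:10}) with smallest eigenvalue growing like $\xi^2$ while $(1+b\xi^2)^{-1}\sim \xi^{-2}$, the smallest eigenvalue of $\tilde L_0(\xi)$ is continuous, strictly positive, and bounded away from $0$ as $\xi\to\pm\infty$; this gives $\kappa>0$.

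The heart of the argument is the reduction of $n(L)$ and $\ker(L)$ to scalar problems. Using $a=c=-b$ and $\psi=B\vp$, write $D:=1-b\p_x^2$ and
\[
L=D\begin{pmatrix}1 & -w\\ -w & 1\end{pmatrix}+\vp\begin{pmatrix}0 & B\\ B & 1\end{pmatrix}=:D\,N+\vp\,M .
\]
Exactly when $|w|<1$, equivalently $\eta_0\in(-\f94,0)$, the constant matrix $N$ is positive definite; then the congruence $P:=N^{-1/2}$ gives $P^T L P=D\,I+\vp\,\widetilde M$ with $\widetilde M:=N^{-1/2}MN^{-1/2}$, and a further constant orthogonal rotation $O$ diagonalizing $\widetilde M$ yields
\[
O^TP^TLPO=\begin{pmatrix}D+\nu_+\vp & 0\\ 0 & D+\nu_-\vp\end{pmatrix},
\]
where $\nu_\pm$ are the generalized eigenvalues solving $\det(M-\nu N)=0$, i.e. $(1-w^2)\nu^2-(1+2Bw)\nu-B^2=0$. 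Since $P,O$ are constant and invertible, $n(L)=n(D+\nu_+\vp)+n(D+\nu_-\vp)$ and likewise for the kernels. The product of the roots is $\nu_+\nu_-=-B^2/(1-w^2)<0$, so precisely one root is positive and one is negative. In the standing-wave case $w=0$ this is the direct orthogonal diagonalization of $M=\begin{pmatrix}0 & B\\ B & 1\end{pmatrix}$ with $B=\pm\sqrt2$, whose eigenvalues are $\mu_+=2,\ \mu_-=-1$.

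Finally I would analyze the two scalar operators $D+\nu\vp=1-b\p_x^2+\nu\eta_0\,\mathrm{sech}^2(\tfrac{x}{2\sqrt b})$. After the substitution $y=\tfrac{x}{2\sqrt b}$ one has $-b\p_x^2=-\tfrac14\p_y^2$, so $4(D+\nu\vp)=-\p_y^2+4+4\nu\eta_0\,\mathrm{sech}^2 y$ is a P\"oschl--Teller operator with essential spectrum $[4,\infty)$. The block with $\nu<0$ has $\nu\eta_0>0$ (a repulsive potential, since $\eta_0<0$), hence is bounded below by $4>0$ and contributes $n=0$ and no kernel. For the remaining ($\nu>0$) block, recall from Proposition \ref{prop:10} that $(\vp',\psi')^T$ is an exact zero mode of $L$; tracking it through the constant transformations shows $\vp'$ is a zero mode of this block. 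As $\vp'\propto \mathrm{sech}^2 y\,\tanh y$ is odd with a single node, it must be the one-node P\"oschl--Teller bound state at energy $0$, which forces the well depth $-4\nu\eta_0=\ell(\ell+1)$ to have $\ell=3$; the bound states below the continuum are then $\{-5,0,3\}$, giving exactly one negative eigenvalue and a one-dimensional kernel. Summing, $n(L)=1$ and $\dim\ker(L)=1$, which transfer to $\tilde L$ and, together with the essential-spectrum bound, establish all three claims. The case $w=0$ is identical, with $\mu_+=2$ producing the well of depth $12$ and $\mu_-=-1$ the repulsive block. The main obstacle is the bookkeeping in the first ($a=c=-b$) case: one must verify that the positivity $N>0$ is \emph{exactly} the range $\eta_0\in(-\f94,0)$, that $\nu_+>0$ is the block carrying $\vp'$, and that the depth is rigidly pinned at $\ell=3$ uniformly in $\eta_0$ — so that the negative-eigenvalue count is exactly one throughout the whole interval rather than jumping at some interior value.
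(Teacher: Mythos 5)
Your overall strategy is the same as the paper's: conjugate by constant invertible matrices to decouple $L$ into two scalar P\"oschl--Teller operators, observe that the block with repulsive potential is strictly positive, and show the attractive block carries exactly one negative eigenvalue and the simple zero mode $\vp'$. Your algebra checks out: the factorization $L=DN+\vp M$ with $N=\bigl(\begin{smallmatrix}1&-w\\-w&1\end{smallmatrix}\bigr)$ is valid precisely because $a=c=-b$ makes $bw\p_x^2-w=-wD$, the positivity of $N$ is exactly $|w|<1$, i.e.\ $\eta_0\in(-\tfrac94,0)$, and your generalized eigenvalues $\nu_\pm$ of $\det(M-\nu N)=0$ reproduce the paper's coefficients $\frac{1+2Bw\pm\sqrt{4B^2+4Bw+1}}{2(1-w^2)}$. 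Where you genuinely differ is in the final scalar count: the paper computes the attractive coefficient explicitly (it simplifies to $-3/b$, independent of $\eta_0$) and then applies the P\"oschl--Teller eigenvalue formula (Lemma \ref{le:90}); you instead use the known zero mode $\vp'\propto \mathrm{sech}^2\tanh$, which is odd with one node, plus Sturm oscillation theory to pin the well depth at $\ell(\ell+1)=12$ without computing it. That shortcut is correct and arguably cleaner, since the simplicity of one-dimensional bound states then gives both $n=1$ and $\dim\ker=1$ for that block in one stroke.

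The one step you assert too quickly is the transfer of the inertia from $L$ to $\tilde L=(1-b\p_x^2)^{-1/2}L(1-b\p_x^2)^{-1/2}$. You call $(1-b\p_x^2)^{-1/2}$ ``invertible'' and invoke Sylvester's law of inertia, but its inverse $(1-b\p_x^2)^{1/2}$ is unbounded, so the standard congruence argument does not apply verbatim: the substitution $g=S^{-1}f$ in the Rayleigh quotient is only legitimate on a dense subspace, and one must justify that the min--max values do not change sign when the infima are taken over these restricted classes. This is precisely why the paper devotes Lemma \ref{le:100} to the point: it first proves that all eigenfunctions of $L$ at non-positive eigenvalues lie in $H^\infty(\rone)$ (by bootstrapping $\mathbf{f}=-(L_0+a^2)^{-1}[\mathbf{V}\mathbf{f}]$), so that $(1-b\p_x^2)^{1/2}$ applied to them is well defined, and then runs the Courant max--min characterization of $\la_0,\la_1,\la_2$ by hand to show $\la_0(\tilde L)<0=\la_1(\tilde L)<\la_2(\tilde L)$. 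Your conclusion is correct, and your kernel formula $\ker(\tilde L)=(1-b\p_x^2)^{1/2}\ker(L)$ is fine once this regularity is in place, but as written the inertia transfer is a gap you would need to fill along these lines. The essential-spectrum part of your argument matches the paper's.
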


\begin{proposition}
\label{prop:30}
Regarding the instability index, we have 
\begin{itemize}
\item For $a=c=-b, b>0$, $w=\pm \f{3+2\eta_0}{\sqrt{3(3+\eta_0)}}$, $B(\eta_0)=\pm \sqrt{\f{3}{3+\eta_0}}$,   and for all $\eta_0\in (-\f{9}{4},0)$, 
$$
\dpr{L^{-1}[(1-b \p_x^2) \left(\begin{array}{c}\psi \\ \vp \end{array}\right)]}{(1-b \p_x^2) \left(\begin{array}{c}\psi \\ \vp \end{array}\right)} < 0  
$$
\item For $a=c<0$, $b>0$, $w=0$, $B=\pm \sqrt{2}$, 
\begin{eqnarray*}
& & \dpr{L^{-1}[(1-b \p_x^2) \left(\begin{array}{c}\psi \\ \vp \end{array}\right)]}{(1-b \p_x^2) \left(\begin{array}{c}\psi \\ \vp \end{array}\right)} =  \\
&=&  \f{1}{3}\left(8 \sqrt{-a}
  \left( -\f{9}{2}-\f{12}{5} \f{b}{|a|}+ 
 \f{3}{10} \f{b^2}{a^2}\right)+ \dpr{(a\p_x^2+1-\vp)^{-1} f}{f}\right)
\end{eqnarray*} 
 In particular, 
 \begin{eqnarray*}
 & & \dpr{L^{-1}[(1-b \p_x^2) \left(\begin{array}{c}\psi \\ \vp \end{array}\right)]}{(1-b \p_x^2) \left(\begin{array}{c}\psi \\ \vp \end{array}\right)}<0,  \ \ 0<\f{b}{-a}<8.00163,\\
& &  \dpr{L^{-1}[(1-b \p_x^2) \left(\begin{array}{c}\psi \\ \vp \end{array}\right)]}{(1-b \p_x^2) \left(\begin{array}{c}\psi \\ \vp \end{array}\right)}>0,  \  \ 
 \f{b}{-a}>8.82864.
 \end{eqnarray*}
\end{itemize}
\end{proposition}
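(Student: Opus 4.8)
In both cases the task is to evaluate the quadratic form $Q:=\dpr{L^{-1}\Phi}{\Phi}$, where $\Phi=(1-b\p_x^2)(\psi,\vp)^{T}$ (note the swapped components). The form is well defined because $\Phi\perp\ker L$: since $L(\vp',\psi')^{T}=0$ from Proposition \ref{prop:10} and $L$ is self-adjoint, $\dpr{\Phi}{(\vp',\psi')^{T}}=0$ once we know $\Phi\in\mathrm{Ran}(L)$. The crucial structural observation is that $L$ is exactly the Fréchet derivative of the profile map: writing the traveling-wave system \eqref{5} as $F(\vp,\psi;w)=0$, one checks that the off-diagonal entry $-w(1-b\p_x^2)+\psi$ of $D_{(\vp,\psi)}F$ coincides with $bw\p_x^2+\psi-w$, so $L=D_{(\vp,\psi)}F$. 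Differentiating $F=0$ in the parameter $w$ and using $\p_w F=-(1-b\p_x^2)(\psi,\vp)^{T}=-\Phi$ gives $L\,\p_w(\vp,\psi)^{T}=\Phi$, i.e. $L^{-1}\Phi=\p_w(\vp,\psi)^{T}$. Consequently, with the momentum functional $P(w):=\dpr{(1-b\p_x^2)\vp}{\psi}$ evaluated on the wave,
\[
Q=\dpr{\p_w(\vp,\psi)^{T}}{(1-b\p_x^2)(\psi,\vp)^{T}}=\f{dP}{dw}.
\]

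\textbf{Case $a=c=-b$, $w\ne0$.} Here Theorem \ref{mchen}(2) gives a genuine one-parameter family, so the identity $Q=dP/dw$ is directly usable. First I would record that with $a=-b$ the width collapses to the constant $\la=\tfrac1{2\sqrt b}$, independent of $\eta_0$. Then $P=B(\eta_0)\,\eta_0^{2}\int sech^2(\la x)(1-b\p_x^2)sech^2(\la x)\,dx = C(\la,b)\,B(\eta_0)\eta_0^{2}$, where $C$ is a fixed constant computed from the elementary integrals $\int sech^4$ and $\int\big((sech^2)'\big)^2$. With $B(\eta_0)=\pm\sqrt{3/(\eta_0+3)}$ and $w(\eta_0)=\pm(3+2\eta_0)/\sqrt{3(3+\eta_0)}$ explicit, the chain rule yields $dP/dw=(dP/d\eta_0)/(dw/d\eta_0)$ as an explicit algebraic function of $\eta_0$. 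The remaining work is a sign analysis: I would show $dP/d\eta_0$ and $dw/d\eta_0$ have opposite signs throughout $\eta_0\in(-\tfrac94,0)$, so that $Q<0$ on the whole interval. This monotonicity/sign check over the full parameter range (taking care of the sign conventions in $B$ and $w$) is the delicate point of Case 1, but it is elementary once $P(\eta_0)$ and $w(\eta_0)$ are written out.

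\textbf{Case $a=c<0$, $w=0$.} Now $w=0$ is forced ($\eta_0=-\tfrac32$, a standing wave) and there is \emph{no} $w$-family to differentiate, which is exactly why this case is genuinely harder and why $Q$ cannot be fully evaluated. The plan is a spectral decoupling. With $w=0$, $a=c$ and $\psi=B\vp$, $B^2=2$, one has $L=(1+a\p_x^2)I+\vp M$ with the constant symmetric matrix $M=\left(\begin{smallmatrix}0&B\\B&1\end{smallmatrix}\right)$, whose eigenvalues are $2$ and $-1$. Diagonalizing $M$ by a fixed orthogonal rotation (which commutes with the scalar operator $1-b\p_x^2$) turns $L$ into $\mathrm{diag}(L_1,L_2)$ with $L_1=a\p_x^2+1+2\vp$ and $L_2=a\p_x^2+1-\vp$, the latter being precisely the operator in \eqref{1000}. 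Resolving the source vector $\vp(B,1)^{T}$ in the eigenbasis gives coefficients $8/3$ and $1/3$, whence
\[
Q=\tfrac83\,\dpr{L_1^{-1}f}{f}+\tfrac13\,\dpr{L_2^{-1}f}{f},\qquad f=(1-b\p_x^2)\vp=\vp-b\vp''.
\]
The $L_2$-term is left as the non-explicit inner product in the statement. The $L_1$-term I would compute in closed form: the profile identity $a\vp''+\vp+\vp^2=0$ gives $L_1\vp=\vp^2$ and $L_1\vp'=0$, hence $\vp''=(\vp+\vp^2)/|a|$ so that $f=(1-\tfrac{b}{|a|})\vp-\tfrac{b}{|a|}\vp^2$; moreover $L_1^{-1}\vp^2=\vp$, while $L_1^{-1}\vp=-\p_\omega\vp_\omega|_{\omega=1}$ is obtained by differentiating the scaled family $\vp_\omega=-\tfrac{3\omega}{2}sech^2(\tfrac{\sqrt\omega}{2\sqrt{|a|}}x)$ solving $a\vp''+\omega\vp+\vp^2=0$. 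Everything then reduces to the elementary integrals $\int sech^4$, $\int sech^6$ and $\int x\,sech^4\tanh$ (the last via integration by parts), producing the quadratic polynomial in $b/|a|$ multiplying $8\sqrt{-a}$.

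\textbf{From the identity to the numerical window.} The exact formula for $Q$ follows by combining the two paragraphs above. The stability/instability statements \eqref{1000} then amount to the sign of $Q$, equivalently to comparing $\dpr{L_2^{-1}f}{f}$ with $8\sqrt{-a}\big(\tfrac92+\tfrac{12}{5}\tfrac{b}{|a|}-\tfrac{3}{10}\tfrac{b^2}{a^2}\big)$. Since $L_2=-|a|\p_x^2+1-\vp\ge 1$ is positive and has no explicitly invertible resolvent on $f$, I would bound $\dpr{L_2^{-1}f}{f}$ from above and below using the variational characterization $\dpr{L_2^{-1}f}{f}=\sup_g\big(2\dpr{f}{g}-\dpr{L_2 g}{g}\big)$ with well-chosen explicit trial functions (built from $\vp$, $\vp^2$ and $sech$-powers), together with the coercivity bound $L_2\ge 1$. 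I expect this two-sided estimation of $\dpr{L_2^{-1}f}{f}$ to be the main obstacle: it is what prevents an exact threshold and produces the gap between the stability bound $b/|a|<8.00163$ and the instability bound $b/|a|>8.82864$, exactly as flagged in the Remark. Sharpening the trial functions narrows this gap but increases the bookkeeping, so I would present clean bounds sufficient for the two stated inequalities rather than the optimal constant.
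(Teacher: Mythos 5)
Your proposal follows essentially the same route as the paper: Case 1 is the identical $\p_w$-differentiation argument giving $L^{-1}\Phi=\p_w(\vp,\psi)^T$ and a sign check of $dP/dw$ over $\eta_0\in(-\f{9}{4},0)$, and Case 2 uses the same orthogonal diagonalization into $L_{KdV}=a\p_x^2+1+2\vp$ and $L_{Hill}=a\p_x^2+1-\vp$ with weights $\f{8}{3}$ and $\f{1}{3}$, the $L_{KdV}$ term evaluated in closed form by parameter differentiation (the paper differentiates the profile equation in $a$ rather than in your scaling parameter $\omega$, which is equivalent) and the $L_{Hill}$ term estimated two-sidedly. The one step you should make concrete is the "well-chosen trial functions": the crude coercivity bound $\dpr{L_{Hill}^{-1}f}{f}\leq\|f\|^2$ alone is too weak to reach the threshold $8.00163$, and the paper's upper bound instead rests on the identity $L_{Hill}^{-1}[a\vp''+\vp]=\f{\vp}{2}$ (from $L_{Hill}\vp=-2\vp^2$), projecting $f=\vp-b\vp''$ onto the direction $a\vp''+\vp$ and applying $0<\dpr{L_{Hill}^{-1}g}{g}\leq\|g\|^2$ only to the orthogonal remainder $g$.
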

Theorem \ref{theo:1} follows by virtue of 
Proposition \ref{prop:25} and Proposition \ref{prop:30}. 
Thus, it remains to prove  these two.

\section{Proof of Proposition \ref{prop:25}}
We start with the gap condition for $\si_{ess.}(\tilde{L})$ stated in
Proposition \ref{prop:25}.
 \subsection{$\tilde{L}$ is strictly positive}
The idea is contained in Proposition \ref{prop:10}. Write
$$
\tilde{L}=(1-b \p_x^2)^{-1/2}L (1-b \p_x^2)^{-1/2}=(1-b \p_x^2)^{-1/2}L_0 (1-b \p_x^2)^{-1/2}+(1-b \p_x^2)^{-1/2}(L-L_0)(1-b \p_x^2)^{-1/2},
$$
where $L-L_0$ is a multiplication by smooth and decaying potential. It is also not hard to see that $(1-b\p_x^2)^{-1/2}$ is given by a convolution     kernel
$K: K(x)=\int_{-\infty}^\infty \f{e^{2\pi i x \xi}}{\sqrt{1+4\pi^2 b\xi^2}} d\xi$, which decays faster than polynomial at $\pm \infty$.
 It follows that the operator $(1-b \p_x^2)^{-1/2}(L-L_0)(1-b \p_x^2)^{-1/2}$ is a compact operator on $L^2(\rone)$ and hence  By Weyl's theorem
 $$
 \si_{ess.}(\tilde{L})= \si_{ess.}((1-b \p_x^2)^{-1/2}L_0 (1-b \p_x^2)^{-1/2})=\si((1-b \p_x^2)^{-1/2}L_0 (1-b \p_x^2)^{-1/2})
 $$
 Thus, as we have explained in the proof of Proposition \ref{prop:10},
 it will suffice to check that the matrix
 $$
 (1+4\pi^2 b \xi^2)^{-1/2} L_0(\xi) (1+4\pi^2 b \xi^2)^{-1/2}
 $$
 is positive definite. But since $L_0(\xi)$ is positive definite, the result follows. Note that this only shows that $\si_{ess.}(\tilde{L})\geq 0$. Since we need to show an actual gap between $\si_{ess.}(\tilde{L})$ and zero, it suffices to observe (by the arguments in Proposition \ref{prop:10}) that the eigenvalues of $L_0(\xi)$ have the rate of $O(\xi^2)$ for large $\xi$, which implies that the positive eigenvalues of $(1+4\pi^2 b \xi^2)^{-1/2} L_0(\xi) (1+4\pi^2 b \xi^2)^{-1/2}$ have the rate of $O(1)$.

\subsection{The negative eigenvalue and the zero eigenvalue are both simple}

We now pass to the harder task of establishing the existence and simplicity of a negative eigenvalue for $\tilde{L}$ as well as the simplicity of the zero eigenvalue. Note that as we have already observed $L\left(\begin{array}{c}\vp' \\ \psi' \end{array}\right)=0$. It follows that
$$
\tilde{L}[(1-b\p_x^2)^{1/2} \left(\begin{array}{c} \vp'\\ \psi'\end{array}\right)]=
(1-b\p_x^2)^{-1/2} [L    \left(\begin{array}{c} \vp'\\ \psi'\end{array}\right)]=0.
$$
Thus, we have already identified one element of $Ker(\tilde{L})$, but it still remains to prove that $dim(Ker(\tilde{L}))=1$, in addition to the existence and the simplicity of the negative eigenvalue of $\tilde{L}$.

  Next, we find it convenient to introduce the following notation for the eigenvalues of a self-adjoint operator $\cl$. Indeed, assume that $\cl=\cl^*$ is bounded from below, $\cl\geq -c$,   we order\footnote{We follow the standard convention  that if an equality appears multiple  times in the sequence of eigenvalues, that signifies that eigenvalue has the same multiplicity}   the eigenvalues  as follows
$$
\inf spec(\cl)=\la_0(\cl) \leq \la_1(\cl)\leq \ldots.
$$
Recall also the following max min principle, due to Courant
$$
\la_0(\cl)=\inf_{\|f\|=1} \dpr{\cl f}{f}, \ \ \la_1(\cl)=\sup_{g\neq 0}
\inf_{\|f\|=1, f\perp g} \dpr{\cl f}{f}, \la_2(\cl)=\sup_{g_1, g_2: g_1\neq a g_2}
\inf_{\|f\|=1, f\perp span[g_1, g_2]} \dpr{\cl f}{f}.
$$
Clearly, our claims can be recast   in the more compact form
\begin{equation}
\label{15}
\la_0(\tilde{L})<0=\la_1(\tilde{L})<\la_2(\tilde{L}).
\end{equation}
 matters from $\tilde{L}$ to standard second order differential  operators, like $L$.
 \begin{lemma}
 \label{le:100}
 Let $a,c<0, b>0$ and  $w: 0\leq |w|<\min\left(1, \f{\sqrt{a c}}{|b|}\right)$. Then
 \begin{itemize}
 \item  all eigenvectors of $L$ from \eqref{110}, corresponding to non-positive  eigenvalues, belong to \\
 $H^\infty(\rone)=\cap_{l=1}^\infty H^l(\rone) $.
 \item If $\cl$ is any bounded from below
 self-adjoint operator, for which  \\ $\la_0(\cl)<0=\la_1(\cl)<\la_2(\cl)$,
 and $S$ is a bounded invertible   operator, then
 $$
 \la_0(S^*\cl S)<0=\la_1(S^*\cl S)<\la_2(S^*\cl S).
 $$
 \item If $L$ has the property $\la_0(L)<0=\la_1(L)<\la_2(L)$, then so does \\
 $\tilde{L}=(1-b\p_x^2)^{-1/2} L (1-b\p_x^2)^{-1/2}$. That is, \eqref{15} holds.
 \end{itemize}
 \end{lemma}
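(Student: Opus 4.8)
The plan is to dispatch the three bullet points in order, since the third combines the first two with a spectral-gap input already in hand.

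For the first bullet I would use elliptic bootstrapping. The operator $L$ in \eqref{110} is a second-order $2\times 2$ matrix differential operator whose $\xi^2$-part has symbol $\left(\begin{smallmatrix} -c & -bw \\ -bw & -a\end{smallmatrix}\right)$, with determinant $ac-b^2w^2>0$ exactly because $|w|<\sqrt{ac}/|b|$; hence $L$ is (uniformly) elliptic, just as in the proof of Proposition \ref{prop:10}. Since the zeroth-order coefficients $\vp,\psi$ are Schwartz functions (constant multiples of $\mathrm{sech}^2$), any eigenvector $f\in D(L)=H^2(\rone)\times H^2(\rone)$ of $Lf=\mu f$ obeys $(\text{principal part})f=\mu f-(\text{lower-order terms})f$, whose right-hand side lies in $H^k$ whenever $f\in H^k$. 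Iterating the elliptic estimate ``$f\in H^k,\ Lf\in H^k\Rightarrow f\in H^{k+2}$'' then pushes $f$ into every $H^l(\rone)$, i.e. $f\in H^\infty(\rone)$. No sign hypothesis on $\mu$ is needed here; the restriction to non-positive eigenvalues is merely what is used afterwards.

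For the second bullet the key identity is $\dpr{S^*\cl S f}{f}=\dpr{\cl (Sf)}{Sf}$, so the quadratic form of $S^*\cl S$ at $f$ equals that of $\cl$ at $Sf$. Using that $S$ is a bounded bijection I would transfer the three relevant min-max data: (i) \emph{negative directions are preserved} — if $\cl$ is negative-definite on a subspace $V$ then $S^*\cl S$ is negative-definite on $S^{-1}V$ (same dimension, $S$ injective), and conversely via $S$, so $n(S^*\cl S)=n(\cl)=1$; (ii) \emph{the kernel is preserved} — since $S^*$ is injective, $\ker(S^*\cl S)=S^{-1}\ker\cl$, hence one-dimensional; (iii) \emph{the gap above zero is preserved} — if $\cl\geq \la_2(\cl)>0$ on the orthogonal complement of $\mathrm{span}(g_1,g_2)$, then for $f\perp S^*g_1,S^*g_2$ one has $Sf\perp g_1,g_2$, so $\dpr{S^*\cl S f}{f}\geq \la_2(\cl)\|Sf\|^2\geq \la_2(\cl)\|S^{-1}\|^{-2}\|f\|^2$, and the Courant formula gives $\la_2(S^*\cl S)\geq \la_2(\cl)\|S^{-1}\|^{-2}>0$. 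Combining (i)--(iii) yields $\la_0(S^*\cl S)<0=\la_1(S^*\cl S)<\la_2(S^*\cl S)$.

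For the third bullet I would take $\cl=L$ and $S=(1-b\p_x^2)^{-1/2}$, which is self-adjoint, so $S^*\cl S=\tilde L$. The main obstacle, and the reason the first bullet is needed, is that this $S$ is bounded on $L^2(\rone)$ but \emph{not boundedly invertible}: its inverse $(1-b\p_x^2)^{1/2}$ is unbounded and $\mathrm{Range}(S)=H^1(\rone)$. The finite-dimensional inertia transfer (i)--(ii) nonetheless goes through, because the negative eigenvector and the kernel vector of $L$ lie in $H^\infty(\rone)\subset H^1(\rone)=\mathrm{Range}(S)$ by the first bullet — which is the only place the surjectivity of $S$ (equivalently $\|S^{-1}\|<\infty$) was used — giving $n(\tilde L)=n(L)=1$ and $\dim\ker\tilde L=1$. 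Step (iii), the one genuinely requiring $\|S^{-1}\|<\infty$, cannot be transferred this way; instead I would invoke the bound $\si_{ess}(\tilde L)\subset[\ka,\infty)$ established in the preceding subsection. Since the essential spectrum of $\tilde L$ is bounded away from $0$ while $\tilde L$ has exactly one negative eigenvalue and a one-dimensional kernel, the eigenvalue $0$ is isolated and simple, so $\la_2(\tilde L)>0$. This gives $\la_0(\tilde L)<0=\la_1(\tilde L)<\la_2(\tilde L)$, i.e. \eqref{15}.
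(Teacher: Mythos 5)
Your proposal is correct and follows essentially the same route as the paper: bootstrap regularity for the eigenvectors, transfer of the Courant min--max data through the congruence $S^*\cl S$ via $\dpr{S^*\cl S f}{f}=\dpr{\cl Sf}{Sf}$, and then a transfer of the negative direction and the kernel through $(1-b\p_x^2)^{\pm 1/2}$ using that smoothness to handle the unbounded inverse. The only cosmetic difference is in the last bullet, where you obtain $\la_1(\tilde{L})\ge 0$ and $\la_2(\tilde{L})>0$ by counting $n(\tilde{L})$ and $\dim\ker(\tilde{L})$ and invoking the essential-spectrum gap $\si_{ess}(\tilde{L})\subset[\ka,\infty)$, whereas the paper runs the variational substitution $h=(1-b\p_x^2)^{-1/2}f$ directly and derives the contradiction inside $\ker(L)$; both arguments are valid and rest on the same mechanism.
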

 \begin{proof}(Lemma \ref{le:100})

 Take the eigenvector $\mathbf{f}$, corresponding to $-a^2, a\geq 0$, i.e. $L \mathbf{f}=-a^2 \mathbf{f}$. As observed in the proof of Proposition \ref{prop:10}, we can represent $L=L_0+\mathbf{V}$, where $\mathbf{V}$ is smooth and decaying matrix potential. In addition, recall $L_0\geq \ka$, hence $L_0+a^2\geq \ka Id$ and hence invertible.  It follows that the eigenvalue problem at $-a^2$ can be rewritten  in the equivalent form
 $$
\mathbf{f}=-(L_0+a^2)^{-1}[\mathbf{V}\mathbf{f}]
 $$
 Clearly, $(L_0+a^2)^{-1}:L^2\to H^2$, whence we get immediately that $f\in H^2$, if $f\in L^2$. Bootstrapping this argument (recall $\mathbf{V}\in C^\infty$)
  yields $f\in H^4, H^6$ etc. In the end, $\mathbf{f}\in H^\infty$.

 Next, we have
  $$
  \la_0(S^*\cl S)
 =\inf_{f:\|f\|=1} \dpr{ S^*\cl S f}{f}=\inf_{f\neq 0} \f{\dpr{\cl S f}{S f}}{\|f\|^2}= \inf_{g\neq 0} \f{\dpr{\cl g}{g}}{\|S^{-1} g\|^2} <0,
$$
since $\la_0(\cl)=\inf_{g:\|g\|=1} \dpr{\cl g}{g}<0$. Since $\la_1(\cl)=0$, it follows that there is $h$, so that  \\
$
\inf_{g\perp h}  \dpr{\cl g}{g} \geq 0.
$
Thus,
$$
\la_1(S^*\cl S)\geq \inf_{f\perp S h} \f{\dpr{S^*\cl S f}{f}}{\|f\|^2}=
\inf_{g\perp  h} \f{\dpr{\cl g}{g}}{\|S^{-1} g\|^2}\geq 0.
$$
 Since $0$ is still an eigenvalue for $\cl$ with say eigenvector $\chi$, it follows that $S^{-1} \chi$ is an eigenvector to $S^*\cl S$, so $0$ is also an eigenvalue for
 $S^*\cl S$ and hence $\la_1(S^*\cl S)=0$.

 Regarding  $\la_2(S^* \cl S)$, we already know that
 $\la_2(S^* \cl S)>\la_1(S^* \cl S)=0$. Assuming the contrary would mean that $\la_2(S^* \cl S)=0$, that is $0$ is a double eigenvalue for $S^* \cl S$, say with linearly independent eigenvectors $f_1, f_2$. From this and the invertibility of $S$, it follows that $S^{-1} f_1, S^{-1} f_2$ are two linearly independent vectors in $Ker(L)$, a contradiction with the assumption that $0$ is a simple eigenvalue for $L$.

 The result regarding $(1-b\p_x^2)^{-1/2} L (1-b\p_x^2)^{-1/2}$ follows in a similar way, although clearly cannot go through the previous claim (since $(1-b\p_x^2)^{-1/2}$ does not have a bounded inverse). To show that $\la_0(\tilde{L})<0$, take an  eigenvector say $g_0:\|g_0\|=1$,
 corresponding to the negative eigenvalue $-a^2$ for $L$. Note that by the first claim, such a $g_0$ is smooth, so in particular $(1-b\p_x^2)^{1/2} g_0$ is well-defined, smooth and non-zero. We have
 $$
 \la_0(\tilde{L})\leq
 \f{\dpr{\tilde{L} (1-b\p_x^2)^{1/2} g_0}{(1-b\p_x^2)^{1/2} g_0}}{\|(1-b\p_x^2)^{1/2} g_0\|^2}=\f{\dpr{L g_0}{g_0}}{\|(1-b\p_x^2)^{1/2} g_0\|^2}=-\f{a_0^2}{\|(1-b\p_x^2)^{1/2} g_0\|^2}<0.
 $$
 Next, to show that $\la_1(\cl)\geq 0$ (the fact that $0$ is an eigenvalue for $\tilde{L}$ was established already), recall that since $L$ has a simple negative eigenvalue, with eigenfunction $g_0$, we have
 $$
 \inf_{g: g\perp g_0} \dpr{L g}{g} = 0.
 $$
 It follows that
 $$
 \la_1(\tilde{L})\geq
 \inf_{f\perp (1-b\p_x^2)^{-1/2} g_0} \f{\dpr{\tilde{L} f}{f}}{\|f\|^2}=
 \inf_{h\perp g_0} \f{\dpr{L h}{h}}{\|(1-b\p_x^2)^{1/2} h\|^2}\geq 0.
 $$
 Regarding the proof of $\la_2(\tilde{L})>0$, we start with $\la_2(\tilde{L})\geq \la_1(\tilde{L})=0$ and we reach a contradiction as before (i.e. we generate two linearly independent vectors in $Ker(L)$), if
 we assume that $\la_2(\tilde{L})=0$.
 \end{proof}
 Using Lemma \ref{le:100}, allows us to
  reduce the proof of \eqref{15} to the proof of
  \begin{equation}
  \label{200}
  \la_1(L)<0=\la_1(L)<\la_2(L),
  \end{equation}
  which we now concentrate on.
  
We have
\begin{eqnarray*}
L &=& \left(\begin{array}{cc}
1+a\p_x^2 & b w \p_x^2 +\psi-w \\
b w \p_x^2 +\psi-w & 1+a \p_x^2 +\vp
\end{array}\right)= \\
&=&  (1+a\p_x^2 ) Id+(b w\p_x^2-w) \left(\begin{array}{cc}
0 & 1 \\
1 & 0
\end{array}\right)+\left(\begin{array}{cc}
0 & \psi \\
\psi & \vp
\end{array}\right)
\end{eqnarray*}
 Introduce an orthogonal matrix $T=\left(
\begin{array}{cc}
 \frac{1}{\sqrt{2}} & \frac{1}{\sqrt{2}} \\
 -\frac{1}{\sqrt{2}} & \frac{1}{\sqrt{2}}
\end{array}
\right)$ and observe that
$$
\left(\begin{array}{cc}
0 & 1 \\
1 & 0
\end{array}\right)=T^{-1} \left(\begin{array}{cc}
1 & 0 \\
0 &-1
\end{array}\right)T.
$$
It follows that
$$
L=T^{-1}\left(   (1+a\p_x^2 ) Id  +(b w\p_x^2-w+\psi)  \left(\begin{array}{cc}
1 & 0 \\
0 &-1
\end{array}\right)   +\f{\vp}{2} \left(\begin{array}{cc}
1 & 1 \\
1& 1
\end{array}\right)   \right)T,
$$
whence, by unitary equivalence, it suffices to consider the operator inside the parentheses.  That is, we consider
\begin{equation}
\label{700}
M=\left(\begin{array}{cc}
-\p_x^2(-a-b w)+(1-w)+\psi+\f{\vp}{2} & \f{\vp}{2} \\
\f{\vp}{2} & -\p_x^2(-a+b w)+(1+w)-\psi+\f{\vp}{2}
\end{array}
\right)
\end{equation}
We shall need the following
\begin{lemma}
\label{le:90}
Let $\al, \la>0$ and $Q\in \rone$. Then, the Hill operator
$$
\cl=-\p_x^2 +\al^2-Q sech^2(\la x)\geq 0
$$
  if and only if
\begin{equation}
\label{230}
\al^2+\al \la \geq Q.
\end{equation}
\end{lemma}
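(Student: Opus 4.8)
The plan is to reduce the nonnegativity question to an exact determination of the bottom of the spectrum of $\cl$, which for this P\"oschl--Teller type potential can be found in closed form. First I would dispose of the trivial regime $Q\le 0$: there $-Q\,sech^2(\la x)$ is a nonnegative perturbation, so $\cl\ge \al^2>0$, while the right-hand side of the asserted inequality satisfies $\al^2+\al\la>0\ge Q$; thus both sides of the equivalence hold automatically and there is nothing to prove. From here on I assume $Q>0$, where the potential well is genuinely attractive and a ground state exists.

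For $Q>0$ I would exhibit the ground state explicitly via the ansatz $\psi=sech^s(\la x)$, with $s>0$ to be chosen. A direct differentiation, using $\tanh^2(\la x)=1-sech^2(\la x)$, yields
\[
\big(-\p_x^2-Q\,sech^2(\la x)\big)sech^s(\la x)=-s^2\la^2\,sech^s(\la x)+\big(s(s+1)\la^2-Q\big)sech^{s+2}(\la x).
\]
Choosing $s$ to be the positive root of $s(s+1)\la^2=Q$, namely $s=\tfrac12\big(-1+\sqrt{1+4Q/\la^2}\big)>0$, kills the last term, so that $\psi=sech^s(\la x)$ is a strictly positive $L^2$ eigenfunction of $\cl$ with eigenvalue $\al^2-s^2\la^2$. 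Since $\psi>0$, the standard characterization of the ground state of a Schr\"odinger operator (the bottom of the spectrum is a simple eigenvalue with strictly positive eigenfunction, and conversely a positive $L^2$ eigenfunction must realize that bottom) identifies $\al^2-s^2\la^2$ as $\inf spec(\cl)$; here $\si_{ess.}(\cl)=[\al^2,\infty)$, so this eigenvalue indeed lies at the bottom of the spectrum.

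It then remains to translate the nonnegativity criterion into the stated algebra. We have $\cl\ge 0$ if and only if $\al^2\ge s^2\la^2$, i.e. (all quantities positive) if and only if $\al\ge s\la$. At the borderline $s=\al/\la$, the relation $Q=s(s+1)\la^2$ gives the threshold $Q=\al(\al+\la)=\al^2+\al\la$; and since $s\mapsto s(s+1)\la^2$ is strictly increasing on $s>0$, the inequality $\al\ge s\la$ is equivalent to $Q\le \al^2+\al\la$, which is exactly \eqref{230}. The only point requiring genuine care is the spectral justification that the explicit positive eigenfunction realizes the bottom of the spectrum (as opposed to being merely some bound state), together with the location of $\si_{ess.}(\cl)$; the differential computation of the eigenvalue and the final monotonicity argument are routine.
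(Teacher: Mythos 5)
Your proof is correct, and it reaches the same underlying spectral fact as the paper but by a different route. The paper rescales $y=\la x$ and then simply quotes the classical formula $k_m=-\bigl[(Z+\tfrac14)^{1/2}-m-\tfrac12\bigr]^2$ for the bound states of the P\"oschl--Teller operator $-\p_{yy}-Z\,sech^2(y)$ (citing Ablowitz), imposes $\al^2/\la^2+k_0\ge 0$, and solves; you instead construct the ground state explicitly via the ansatz $sech^s(\la x)$ with $s(s+1)\la^2=Q$, and your $s=\tfrac12\bigl(-1+\sqrt{1+4Q/\la^2}\bigr)$ is exactly $(Q/\la^2+\tfrac14)^{1/2}-\tfrac12$, so the two ground-state energies agree. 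What your version buys is self-containedness: no external citation is needed, at the price of having to justify that the positive $L^2$ eigenfunction you exhibit realizes $\inf spec(\cl)$. You correctly flag this as the one point requiring care, and the standard argument closes it: any eigenvalue strictly below $\al^2-s^2\la^2$ would lie below $\si_{ess.}(\cl)=[\al^2,\infty)$, hence be a genuine eigenvalue whose ground-state eigenfunction can be taken positive, contradicting orthogonality to your strictly positive $\psi$. Your handling of $Q\le 0$ and the final monotonicity step converting $\al\ge s\la$ into $Q\le\al^2+\al\la$ match the paper's conclusion exactly.
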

\begin{proof}
This is standard result, which follows from the ones found in the literature by a simple change of variables. First, if $Q\leq 0$, we see right away that $\cl>0$ and also the inequality \eqref{230} is satisfied as well. So, assume $Q>0$. Consider $\cl f = \si f$ and introduce  $f(x)=g(\la x)$. We have (after dividing by $\la^2$ and assigning $y=\la x$)
$$
[-  \p_{yy}+\f{\al^2}{\la^2} - \f{Q}{\la^2} sech^2(y) ] g=\f{\si}{\la^2} g(y)
$$
Recall that the negative the operator $-\p_{yy}-Z sech^2(y)$ are
$k_m=-\left[ \left(
Z+\frac{1}{4}\right)^{\frac{1}{2}}-m-\frac{1}{2}\right]^2$,
provided $\left(
Z+\frac{1}{4}\right)^{\frac{1}{2}}-m-\frac{1}{2}>0$, $m=0,1,2...$
[see \cite{Ab}]. Note that $k_0=\inf \si(-\p_{yy}-Z sech^2(y))$ and hence, to avoid negative spectrum, we need to have
$$
0 \leq \f{\al^2}{\la^2} +k_0= \f{\al^2}{\la^2} - \left[ \left(
\f{Q}{\la^2}+\frac{1}{4}\right)^{\frac{1}{2}}-\frac{1}{2}\right]^2
$$
Solving this last inequality yields \eqref{230}.
\end{proof}
We are now ready to proceed with the count of $n(\tilde{L})$ in  each particular case of consideration. \\
\\
{\bf Case I: $a=c=-b, b>0$} \\
\\
 Going back to the operator $M$, we can rewrite it as 
 $$
 M= S \left(\begin{array}{cc} 
 -\p_x^2+\f{1}{b}+ \f{B+\f{1}{2}}{ b(1-w)}\vp  &  \f{ \vp }{2b\sqrt{1-w^2}} \\
 \f{ \vp }{2b\sqrt{1-w^2}}   &   -\p_x^2+\f{1}{b}+ \f{-B+\f{1}{2}}{ b(1+w)}\vp  
 \end{array}
 \right)    S 
 $$
 where $S=\left(\begin{array}{cc} 
 \sqrt{b(1-w)} & 0 \\
 0 & \sqrt{b(1+w)}
 \end{array}
 \right)$. 
 Thus, according to Lemma \ref{le:100}, we have reduced matters to 
 $$
 M_1=(-\p_x^2+\f{1}{b})Id+ \vp \left(\begin{array}{cc} 
 \f{B+\f{1}{2}}{ b(1-w)}  & \f{ 1 }{2b\sqrt{1-w^2}} \\
  \f{ 1 }{2b\sqrt{1-w^2}} &    \f{-B+\f{1}{2}}{ b(1+w)}
 \end{array}
 \right)
 $$
 Diagonalizing this last symmetric matrix yields the representation 
 $$
 \left(\begin{array}{cc} 
 \f{B+\f{1}{2}}{ b(1-w)}  & \f{ 1 }{2b\sqrt{1-w^2}} \\
  \f{ 1 }{2b\sqrt{1-w^2}} &   \vp \f{-B+\f{1}{2}}{ b(1+w)}
 \end{array}
 \right)=U^* \left(\begin{array}{cc} 
\frac{1+2 B w+\sqrt{4 B^2+4 B w+1} }{2 b \left(1-w^2\right)}  & 0 \\
0 &    \frac{1+2 B w - \sqrt{4 B^2+4 B w+1} }{2 b \left(1-w^2\right)}
 \end{array} \right) U 
 $$
for some orthogonal matrix $U$.  Factoring out $U^*, U$ again and using Lemma \ref{le:100} once more reduces us to the operator
$$
M_2= \left(\begin{array}{cc} 
\cl_1 & 0 \\
0 &   \cl_2 
 \end{array} \right)
$$
 which contains the following  Hill operators on the main diagonal 
 \begin{eqnarray*}
 \cl_1 &=& -\p_x^2+\f{1}{b}+\eta_0 \frac{1+2 B w+\sqrt{4 B^2+4 B w+1} }{2 b \left(1-w^2\right)} sech^2\left(\f{x}{2\sqrt{b}}\right); \\ 
 \cl_2 &=& -\p_x^2+\f{1}{b}+ \eta_0 \frac{1+2 B w - \sqrt{4 B^2+4 B w+1} }{2 b \left(1-w^2\right)} sech^2\left(\f{x}{2\sqrt{b}}\right)
 \end{eqnarray*}
Note that $n(\tilde{L})=n(\cl_1)+n(\cl_2)$.  

Using the formulas 
 $$
 B(\eta_0)=\pm \sqrt{\f{3}{3+\eta_0}}, w(\eta_0)=\pm \f{3+2\eta_0}{\sqrt{3(3+\eta_0)}}
 $$
 yields 
 \begin{eqnarray*}
 \cl_1 &=& -\p_x^2+\f{1}{b}-\f{3}{b} sech^2\left(\f{x}{2\sqrt{b}}\right) ; \\ 
 \cl_2 &=& -\p_x^2+\f{1}{b}-\f{3\eta_0}{b(9+4\eta_0)}sech^2\left(\f{x}{2\sqrt{b}}\right)
 \end{eqnarray*}
 According to the formulas for the eigenvalues in Lemma \ref{le:90} 
 (with $\al=\f{1}{\sqrt{b}}, \la=\f{1}{2\sqrt{b}}$,$ Q=\f{3}{b}>0$) we have that 
 $$
 \la_1(\cl_1)= \f{\al^2}{\la^2}-\left(\sqrt{\f{Q}{\la^2}+\f{1}{4}}-\f{3}{2}\right)^2=2-(\sqrt{12.25}-1.5)^2=0,
 $$
 which indicates that $\cl_1$ has one negative eigenvalue and the next one is zero, whence $n(\cl_1)=1$ for all $\eta_0>-3$.  Thus, $n(\tilde{L})=1+n(\cl_2)$.  
  It is also immediately clear that for $\eta_0\in (-\f{9}{4},0)$, $\cl_2>0$ and hence $n(\tilde{L})=1$.      
 \\
 \\
      {\bf Case II: $a=c<0, b=d>0,$ $a+b\neq 0$} \\
      \\
      In this case, we have $p=\f{c+b}{a+b}=1$, $\eta_0=\f{3(1-2p)}{2p}=-\f{3}{2}$ and thus $w(\eta_0)=w(-3/2)=0$, $\la=\f{1}{2\sqrt{-a}}, B(\eta_0)=\pm \sqrt{2}$.
      This simplifies the computations quite a bit. In fact, starting from the operator $M$, defined in \eqref{700}, we see that it has the form
      $$
      M=(a\p_x^2+1) Id +\left(\begin{array}{cc} B+\f{1}{2} & \f{1}{2} \\
      \f{1}{2} & -B+\f{1}{2}   \end{array}\right)\vp
      $$
      Recall that here $B=\pm \sqrt{2}$. Consider first $B=\sqrt{2}$.
      Diagonalizing the matrix vian an orthogonal matrix $S$ yields the representation
      \begin{eqnarray*}
      & &
      \left(\begin{array}{cc} \sqrt{2}+\f{1}{2} & \f{1}{2} \\
      \f{1}{2} & -\sqrt{2}+\f{1}{2}   \end{array}\right)= S^{-1}
      \left(\begin{array}{cc} 2 & 0 \\
      0 & -1   \end{array}\right) S, \\
      & & S=\f{1}{\sqrt{6}}\left(
\begin{array}{cc}
 \sqrt{3+2 \sqrt{2}} &
   \sqrt{3-2 \sqrt{2}} \\
 -\sqrt{3-2 \sqrt{2}} &
 \sqrt{3+2\sqrt{2}}
\end{array}
\right)
      \end{eqnarray*}
 Thus, in this case, we have represented the operator $L$ in the form
 \begin{equation}
 \label{710}
 L= (S T)^* \left(\begin{array}{cc} -a\p_x^2+1+2\vp & 0 \\
     0 & -a\p_x^2+1-\vp   \end{array}\right)S T,
 \end{equation}
     where $S, T$ are explicit orthogonal matrices. It is now clear that since $\eta_0=-\f{3}{2}<0$, we have that $\vp(x)<0$ and hence the operator
     $a\p_x^2+1-\vp>0$. On the other hand,
     $L_{KdV}= a\p_x^2+1+2\vp$ is well known to have a zero eigenvalue (with eigenfunction $\vp'$) and an unique simple negative eigenvalue.

For the case $B=-\sqrt{2}$, we have \eqref{710}, with
$$
S=\f{1}{\sqrt{6}}\left(
\begin{array}{cc}
 \sqrt{3-2 \sqrt{2}} &
   \sqrt{3+2 \sqrt{2}} \\
 \sqrt{3+2 \sqrt{2}} &
 -\sqrt{3-2\sqrt{2}}
\end{array}
\right)
$$

\section{Proof of Proposition \ref{prop:30}}
\label{sec:4}
The purpose of this section is to compute the quantity appearing in \eqref{150}, whose negativity will be equivalent to the stability of the waves.  Thus, we need to find
$$
L^{-1} [(1-b \p_x^2)\left(\begin{array}{c}  \psi \\
\vp\end{array} \right)].
$$
Here, our considerations need to be split   in two cases:  $a=c=-b$, and $a=c<0, b>0$.   

The case $a=c=-b$ is easier to manage, since in int we have a {\it a free parameter} $w=w(\eta_0)$ that we can differentiate with respect to in \eqref{5}. The remaining case is  harder, because the parameter $\eta_0=-3/2$, whence $w=0$ and one cannot apply the same technique.   
\subsection{The case $a=c=-b$, $b>0$}
Taking a derivative with respect to $w$ in \eqref{5}, we find
$$
L[\left(\begin{array}{c}  \p_w \vp \\
\p_w \psi\end{array} \right)= (1-b \p_x^2)\left(\begin{array}{c}  \psi \\
\vp\end{array} \right),
$$
whence
$$
L^{-1} [(1-b \p_x^2)\left(\begin{array}{c}  \psi \\
\vp\end{array} \right)]=\left(\begin{array}{c}  \p_w \vp \\
\p_w \psi\end{array} \right).
$$
We obtain
\begin{eqnarray*}
& & \dpr{L^{-1}[(1-b \p_x^2) \left(\begin{array}{c}\psi \\ \vp \end{array}\right)]}{(1-b \p_x^2) \left(\begin{array}{c}\psi \\ \vp \end{array}\right)}=  \dpr{(1-b \p_x^2)\left(\begin{array}{c}  \psi \\
\vp\end{array} \right)}{\left(\begin{array}{c}  \p_w \vp \\
\p_w \psi\end{array} \right)}=\\
&=& \p_w [\dpr{\vp}{\psi}+ b  \dpr{\vp'}{\psi'}] =
\p_w[B(\eta_0)\int \vp(\xi)^2+b (\vp'(\xi))^2 d\xi]= \\
&=&
B\p_w [\int_{\mathbb{R}}{[\varphi^2(\xi)+b\varphi'^2(\xi)]}d\xi]+\p_w B  \int_{\mathbb{R}}{[\varphi^2(\xi)+b\varphi'^2(\xi)]}d\xi=\\
&=& \frac{16\sqrt{b}}{5}\left[ B\frac{d\eta_0^2}{dw}+\eta_0^2\frac{dB}{dw}\right]=\frac{16\sqrt{b}}{5}\left[ 2B+\eta_0\frac{dB}{d\eta_0}\right]\eta_0\frac{d\eta_0}{dw}=:d(w)
\end{eqnarray*}
We are now ready to compute this last expression in the cases of interest. \\
\subsubsection{$B(\eta_0)=-\sqrt{\f{3}{3+\eta_0}}, 
w=-\frac{3+2\eta_0}{\sqrt{3(3+\eta_0)}}$} 
   We have
    $$
     \frac{d\eta_0}{dw}=-\frac{2\sqrt{3}(3+\eta_0)^{\frac{3}{2}}}{2\eta_0+9},
     \ \ \frac{dB}{d\eta_0}=\frac{\sqrt{3}}{2}\frac{1}{(3+\eta_0)^{\frac{3}{2}}}
     $$ 
      and
      $$d(w)=-\frac{48\sqrt{3b}}{10(3+\eta_0)^{\frac{3}{2}}}(4+\eta_0)\eta_0\frac{d\eta_0}{dw}<0
      $$ 
  for $-\f{9}{4}<\eta_0<0$. 
  \subsubsection{$B(\eta_0)=\sqrt{\f{3}{3+\eta_0}}, w=\frac{3+2\eta_0}{\sqrt{3(3+\eta_0)}}$}     We have 
 $$
     \frac{d\eta_0}{dw}=\frac{2\sqrt{3}(3+\eta_0)^{\frac{3}{2}}}{2\eta_0+9},
     \ \ \frac{dB}{d\eta_0}=-\frac{\sqrt{3}}{2}\frac{1}{(3+\eta_0)^{\frac{3}{2}}}
     $$
     hence 
     $$
     d(w)=\frac{48\sqrt{3b}}{10(3+\eta_0)^{\frac{3}{2}}}(4+\eta_0)\eta_0\frac{d\eta_0}{dw}<0. 
      $$ 
   for $-\f{9}{4}<\eta_0<0$.  
      
\subsection{The  case: $a=c<0, b>0$} As we have discussed above, we have   explicit formulas for all the quantities involved. Namely,
we have  $w=0, \la=\f{1}{2\sqrt{-a}}, B =\pm \sqrt{2}$. Thus, 
$$
\vp(x)=-\f{3}{2} sech^2\left(\f{x}{2\sqrt{-a}}\right).
$$
\subsubsection{Case $B=\sqrt{2}$}
We need to compute
$$
\dpr{L^{-1}\left( \begin{array}{c} (1-b\p_x^2)\psi \\ (1-b\p_x^2)\vp \end{array} \right)}{\left( \begin{array}{c} (1-b\p_x^2)\psi \\ (1-b\p_x^2)\vp \end{array} \right)}
$$
 To that end, we use the representation \eqref{710}. We have
 \begin{eqnarray*}
 I &=& \dpr{L^{-1}\left( \begin{array}{c} (1-b\p_x^2)\psi \\ (1-b\p_x^2)\vp \end{array} \right)}{\left( \begin{array}{c} (1-b\p_x^2)\psi \\ (1-b\p_x^2)\vp \end{array} \right)} = \\
 & & = \dpr{\left(\begin{array}{cc} a\p_x^2+1+2\vp & 0 \\
     0 & a\p_x^2+1-\vp   \end{array}\right)[ S T \left( \begin{array}{c} \sqrt{2} \\ 1 \end{array} \right)(1-b\p_x^2)\vp]}{S T \left( \begin{array}{c} \sqrt{2} \\ 1 \end{array} \right)(1-b\p_x^2)\vp }
 \end{eqnarray*}
A direct computation shows that $S T \left( \begin{array}{c} \sqrt{2} \\ 1 \end{array} \right)=\left(\begin{array}{c} 2 \sqrt{\frac{2}{3}} \\  -\frac{1}{\sqrt{3}}\end{array}\right)$, whence our index $I$ can be computed as follows
$$
I=\f{8}{3}\dpr{(a\p_x^2+1+2\vp)^{-1}[(1-b\p_x^2)\vp]}{(1-b\p_x^2)\vp}+
\f{1}{3}\dpr{(a\p_x^2+1-\vp)^{-1}[(1-b\p_x^2)\vp]}{(1-b\p_x^2)\vp}
$$
Denote $f=(1-b\p_x^2)\vp$ and 
\begin{eqnarray*}
L_{KdV} &=& a\p_x^2+1+2\vp \\ 
L_{Hill} &=& a\p_x^2+1-\vp
\end{eqnarray*} 
 Note that by Weyl's theorem 
  $ \sigma_{ess.}(L_{Hill})=[1, \infty)$. On the other hand, by the fact that $\vp<0$, the potential $-\vp>0$ and hence, by the results for 
  absence of embedded eigenvalues, $\sigma(L_{Hill})=\sigma_{ess.}(L_{Hill})=[1, \infty)$. 
   We now  compute the index 
$$
I=\f{1}{3}(8 \dpr{L_{KdV}^{-1} f}{f}+\dpr{L_{Hill}^{-1} f}{f}). 
$$
To that end, we differentiate  the equation 
  $$
  a\varphi''+\varphi+\varphi^2=0 
  $$
  with respect to $a$. We get\footnote{we use the notation $\varphi_a=\p_a \vp$ denotes the derivative with respect to $a$}
  \begin{equation}
  \label{720a}
    L_{KdV}\varphi_a=-\varphi'',
  \end{equation}
whence  $L_{KdV}^{-1}[\vp'']=-\vp_a$. 
  Using that $L_{KdV}\varphi=\varphi^2=-a\vp''-\vp$ and the above relation, we obtain that
    $$
   -\varphi=aL_{KdV}^{-1}\varphi''+L_{KdV}^{-1}\varphi=-a\vp_a+L_{KdV}^{-1}\varphi . 
    $$
It follows that 
    \begin{eqnarray}
    \label{720b}
      L_{KdV}^{-1}\varphi &=& a\varphi_a-\varphi, \\
     \label{720c}
        L_{KdV}^{-1}f &=& (a+b)\varphi_a-\varphi .
      \end{eqnarray}
      and
       $$
          \langle L_{KdV}^{-1}f,f\rangle=(a+b)\langle \varphi_a, \varphi
          \rangle-b(a+b)\langle \varphi_a, \varphi'' \rangle-\langle \varphi, \varphi
          \rangle+b\langle \varphi, \varphi'' \rangle.
    $$
      By direct computations
          \begin{eqnarray*}
          \langle \varphi_a, \varphi
          \rangle &=& \frac{1}{2}\frac{d}{da}\int_{-\infty}^{+\infty}{\varphi^2}dx=
          -\f{3}{2\sqrt{-a}}, \\
          \langle \varphi, \varphi''
          \rangle &=& -\int_{-\infty}^{+\infty}{\varphi'^2}dx=-\frac{6}{5\sqrt{-a}}, \\ 
         \langle \varphi_a, \varphi''
          \rangle &=& -\frac{1}{2}\frac{d}{da}\int_{-\infty}^{+\infty}{\varphi'^2}dx=-\frac{3}{10|a|\sqrt{-a}}, \\
          \langle \varphi, \varphi
          \rangle &=& \f{9}{2}\sqrt{-a}
          \int_{-\infty}^{+\infty}{sech^4(y)}dy=6 \sqrt{-a}. 
          \end{eqnarray*} 
As a consequence, 
  \begin{eqnarray*}
  \langle
  L_{KdV}^{-1}f,f\rangle &=& -\frac{3(a+b)}{2\sqrt{-a}}+\frac{3b(a+b)}{10|a|
  \sqrt{-a}}-6\sqrt{-a}-\frac{6b}{5\sqrt{-a}}=\\
  &=& -\f{9}{2}\sqrt{-a}-\f{12}{5}\f{b}{\sqrt{-a}} + 
  \f{3b^2}{10|a| \sqrt{-a}}=   \sqrt{-a}
  \left( -\f{9}{2}-\f{12}{5} \f{b}{|a|}+ 
 \f{3}{10} \f{b^2}{a^2}\right). 
  \end{eqnarray*}
  
This yields the desired computation for the terms involving $L_{KdV}^{-1}$. We   turn our attention to $L_{Hill}^{-1}$. The situation here is a bit trickier, since we cannot compute explicitly the quantities $L_{Hill}^{-1}[\vp], L_{Hill}^{-1}[\vp'']$, as required in the formula for $I$. Instead, we need to rely on   estimates. To start with, observe that 
$$
L_{Hill}[\vp]=a\vp''+\vp-\vp^2=-2\vp^2=2a\vp''+2\vp,
$$
whence 
\begin{equation}
\label{800}
L_{Hill}^{-1}[a\vp''+\vp]=\f{\vp}{2}.  
\end{equation}
    Since we need to compute $ L_{Hill}^{-1}[f]=L_{Hill}^{-1}[\vp-b\vp'']$, we project the vector $f$ onto $a\vp''+\vp$ and its orthogonal subspace as follows 
    $$
    f=\vp-b\vp''=\f{\dpr{\vp-b \vp''}{a\vp''+\vp}}{\|a\vp''+\vp\|^2}(a\vp''+\vp)+g
    $$
    Calculations then show that since 
    $$
   \|\vp''\|^2= \dpr{\vp''}{\vp''}=\f{6}{7 |a|\sqrt{-a}}, 
    $$
    we have that 
    $$
    f=\left(\f{7}{9}+\f{2}{9}\f{b}{|a|}\right) (a\vp''+\vp)+g
    $$
    whence 
    $$
    L_{Hill}^{-1}[f]=\left(\f{7}{9}+\f{2}{9}\f{b}{|a|}\right) \f{\vp}{2}+L_{Hill}^{-1}[g].
    $$
    Thus, the quantity that needs to be computed is 
    \begin{eqnarray*}
 \dpr{ L_{Hill}^{-1} f}{f} &=& 
    \f{1}{2}\left(\f{7}{9}+\f{2}{9}\f{b}{|a|}\right)\dpr{\vp-b\vp''}{\vp}
    + \dpr{ L_{Hill}^{-1} g}{f}=\\
    &=& \f{1}{2}\left(\f{7}{9}+\f{2}{9}\f{b}{|a|}\right)\dpr{\vp-b\vp''}{\vp} + 
    \dpr{ L_{Hill}^{-1} g}{g}+\f{1}{2}\left(\f{7}{9}+\f{2}{9}\f{b}{|a|}\right)\dpr{g}{\vp}
    \end{eqnarray*}
All of these can be computed explicitly, except for $  \dpr{ L_{Hill}^{-1} g}{g}$, which we estimate by \\  $  0<\dpr{ L_{Hill}^{-1} g}{g}\leq \|g\|^2$, which holds since   $\si(L_{Hill})\subset [1, \infty)$. Thus, 
\begin{eqnarray*}
 \dpr{ L_{Hill}^{-1} f}{f} &\leq & \f{1}{2}\left(\f{7}{9}+\f{2}{9}\f{b}{|a|}\right)\dpr{\vp-b\vp''}{\vp} +  \|g\|^2 +\f{1}{2}\left(\f{7}{9}+\f{2}{9}\f{b}{|a|}\right)\dpr{g}{\vp}=\\
 & = &  \sqrt{-a}\left( \f{22}{45}\f{b^2}{a^2}+\f{2}{9} \f{b}{|a|}+\f{26}{9}\right)
 \end{eqnarray*}
 and on the other hand 
 $$
  \dpr{ L_{Hill}^{-1} f}{f}>\f{1}{2}\left(\f{7}{9}+\f{2}{9}\f{b}{|a|}\right)\dpr{\vp-b\vp''}{\vp} +   \f{1}{2}\left(\f{7}{9}+\f{2}{9}\f{b}{|a|}\right)\dpr{g}{\vp}=
  \sqrt{-a}\left( \f{4}{45} \f{b^2}{a^2} + \f{46}{45} \f{b}{|a| } +\f{ 112}{45}  \right). 
 $$
  Thus, we obtain the following {\it estimate} for the instability index $I$
 \begin{eqnarray*}
  3 I &=& 8 \dpr{L_{KdV}^{-1} f}{f}+\dpr{L_{Hill}^{-1} f}{f}\leq \sqrt{-a}\left(   8\left(-\f{9}{2}-\f{12 b}{5|a|}+ 
 \f{3 b^2}{10 a^2}\right) 
 +  \left( \f{22}{45}\f{b^2}{a^2}+\f{2}{9} \f{b}{\sqrt{-a}}+\f{26}{9}\right)\right)=\\
 &=& \f{2\sqrt{-a}}{45}\left( 65 \f{b^2}{a^2}-427 \f{b}{\sqrt{-a}}-745\right)    
  \end{eqnarray*}

On the other hand, we have the following estimate from below 
\begin{eqnarray*}
  3 I &=& 8 \dpr{L_{KdV}^{-1} f}{f}+\dpr{L_{Hill}^{-1} f}{f} > \sqrt{-a} \left( 8\left(-\f{9}{2}-\f{12 b}{5|a|}+ 
 \f{3 b^2}{10 a^2}\right)+  \left( \f{4}{45} \f{b^2}{a^2} + \f{46}{45} \f{b}{|a| } +\f{ 112}{45}  \right)  \right)\\
 &=&  \f{2\sqrt{-a}}{45}\left(56 \f{b^2}{a^2}-409  \f{b}{|a| } -754    \right).
  \end{eqnarray*}
The picture below shows the graphs of the two estimates of $3I/\sqrt{-a}$. If one solves the corresponding quadratic equations, we see that  we have {\bf stability}, whenever 
$$
0\leq \f{b}{-a}< \frac{1}{130} \left(427+3 \sqrt{41781}\right)\sim 8.00163. 
$$
and {\bf instability}, when 
$$
\f{b}{-a}> \frac{1}{112} \left(409+3 \sqrt{37353}\right)\sim 8.82864. 
$$
   \begin{figure}[h]
\centering
\includegraphics[width=15cm,height=7cm]{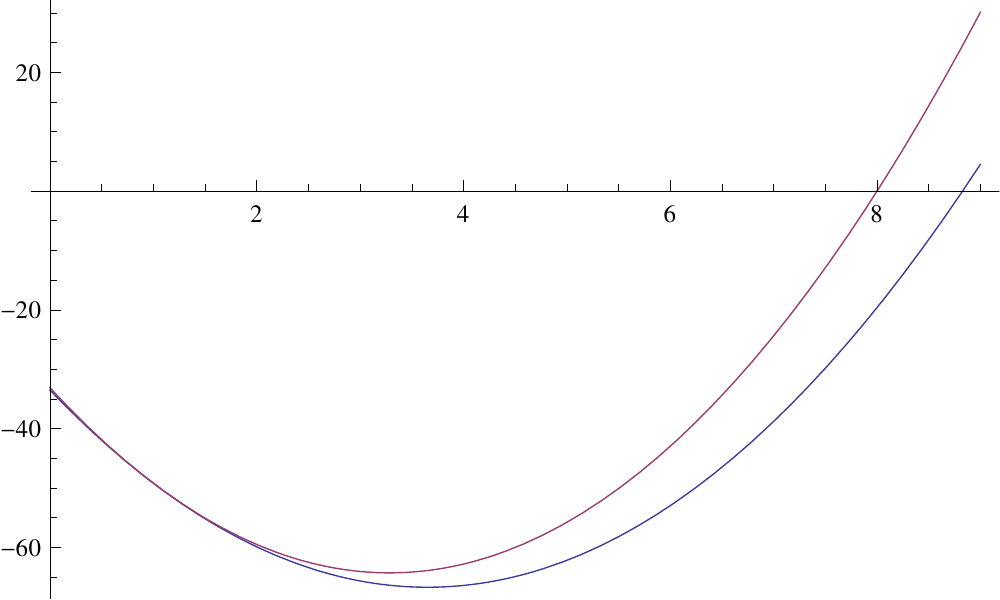}
\caption{The picture shows the graphs of the function 
$\f{2}{45}\left( 65 z^2-427 z-745\right) $ is in blue, while 
$ \f{2}{45} (56 z^2-409  z -754) $ in red. Note that the graphs do coincide for $z=1$, which is the case, since there $g=0$ and the computations becomes  precise. }
\label{fig}
\end{figure}

\subsubsection{Case $B=-\sqrt{2}$}
In this case, the computation for the index is the same since
\begin{eqnarray*}
 I &=& \dpr{L^{-1}\left( \begin{array}{c} (1-b\p_x^2)\psi \\ (1-b\p_x^2)\vp \end{array} \right)}{\left( \begin{array}{c} (1-b\p_x^2)\psi \\ (1-b\p_x^2)\vp \end{array} \right)} = \\
 &=&  \dpr{\left(\begin{array}{cc} a\p_x^2+1+2\vp & 0 \\
     0 & a\p_x^2+1-\vp   \end{array}\right)[ S T \left( \begin{array}{c} -\sqrt{2} \\ 1 \end{array} \right)(1-b\p_x^2)\vp]}{S T \left( \begin{array}{c} -\sqrt{2} \\ 1 \end{array} \right)(1-b\p_x^2)\vp }=\\
     &=& \f{1}{3}(8 \dpr{L_{KdV}^{-1} f}{f}+\dpr{L_{Hill}^{-1} f}{f}),
 \end{eqnarray*}
 where in the last line, we have used that $S T \left( \begin{array}{c} \sqrt{2} \\ 1 \end{array} \right)=\left(\begin{array}{c} 2 \sqrt{\frac{2}{3}} \\  -\frac{1}{\sqrt{3}}\end{array}\right)$ as above. The rest of the argument proceeds in exactly the same way, since the exact same quantity is being computed.

\end{document}